\newcommand{\vf}{\varphi}
\newcommand{\R}{\mathbb R}
\newcommand{\ccc}{{\cal C}^1(\Rd,\Rd\otimes\Rd)}
\newcommand{\cccc}{{\cal C}^1(\Rd,\Rd)}
\newcommand{\Rp}{\mathbb R^+}
\newcommand{\Rd}{\mathbb R^d}
\newcommand{\D}{\mathbb D\,(\Rp,\Rd)}
\newcommand{\Ddd}{\mathbb D\,(\Rp,\R^{2d})}
\newcommand{\Dddd}{\mathbb D\,(\Rp,\R^{3d})}
\newcommand{\Ds}{\mathbb D\,(\Rp,\R^}
\newcommand{\Dj}{\mathbb D\,(\Rp,\R)}
\newcommand{\Dii}{\mathbb D\,(\Rp,\R^{2d})}
\newcommand{\ro}{r_0}
\newcommand{\lra}{\longrightarrow}
\newcommand{\ov}{\overline}
\newcommand{\wh}{\widehat}
\newcommand{\wy}{\widehat Y}
\newcommand{\wk}{\widehat K}
\newcommand{\wz}{\widehat Z}
\newcommand{\wx}{\widehat X}
\newcommand{\wj}{\widehat J}
\newcommand{\rn}{^{\rho^n}}
\newcommand{\rnn}{^{n,\rho^n}}
\newcommand{\tk}{t^n_{k}}
\newcommand{\tkn}{t^n_{k+1}}
\newcommand{\tkk}{t^n_{k-1}}
\newcommand{\N}{\mathbb N}
\newcommand{\normalshape}{\rm}
\newcommand{\arrowd}{\mathop{\longrightarrow}_{\cal D}}
\newcommand{\arrowp}{\mathop{\longrightarrow}_{\cal P}}
\newtheorem{theorem}{\bf Theorem}[subsection]
\newtheorem{lemma}[theorem]{\bf Lemma}
\newtheorem{corollary}[theorem]{\bf Corollary}
\newtheorem{remark}[theorem]{\bf Remark}
\newenvironment{proof}{{\sc Proof.}}{\hfill $\Box$}
\newcommand{\nsubsection}{\setcounter{equation}{0}\subsection}
\begin{document}
\title{\bf On reflected  Stratonovich  stochastic differential equations}
\author{Leszek S\l omi\'nski\footnote{E-mail address: leszeks@mat.umk.pl;
tel.: +48-566112954; fax: +48-566112987.}\\
 \small Faculty of Mathematics and Computer Science,
Nicolaus Copernicus University,\\
\small ul. Chopina 12/18, 87-100 Toru\'n, Poland}
\date{}
\maketitle
\begin{abstract}
We study the problem of existence, uniqueness and  approximation
of  solutions of finite dimensional Stra\-ton\-ovich stochastic
differential equations with reflecting boundary condition driven
by  semimartingales with jumps. As an application we generalize
known results on the Wong-Zakai type approximations.
\end{abstract}
{\em Key Words}: reflected stochastic differential equation,
Stratonovich stochastic differential
equation, Wong-Zakai type approximation.\\
{\em AMS 2000 Subject Classification}: Primary: 60H20; Secondary:
60G17.

\nsubsection{ Introduction}

Let  $D$  be a connected domain in $\Rd$. We consider a
$d$-dimensional stochastic differential equations (SDEs) on a
domain $D$ with reflecting boundary condition of the form
\begin{equation}
\label{eq1.1} X_t=X_0+\int_0^tf(X_s)\circ dZ_s+K_t,\quad t\in\Rp.
\end{equation}
Here the notation  `$ \circ$'  indicates that we deal with
stochastic integral of Stratonovich type. In (\ref{eq1.1}) $X_0\in
\bar D=D\cup\partial D$, $X$ is a reflecting process on $\bar D$,
$K$ is a bounded variation process with  variation $|K|$
increasing only when $X_t\in\partial D$  and  $Z$ is a
$d$-dimensional semimartingale with jumps.

In the nonreflected case,  that is, when $D=\R^d$ and $K=0$ the
above type of SDEs has been investigated in many papers (see,
e.g., \cite{KP1,KPP,Ma,Ma1,Mar, Sl,Sl3,WZ,WZ1}). In  particular, Wong
and Zakai \cite{WZ,WZ1} proved that for $Z$ being a Brownian
motion the solution of (\ref{eq1.1}) is a  limit  of the following
simple approximation scheme: $\wh X^n_0=X_0$,
\[
\wh X^n_t=\wh X^n_{\frac{k}{n}}+n\int_{\frac{k}{n}}^tf(\wh
X_s^n)\,ds(Z_{\frac{k+1}{n}}-Z_{\frac{k}{n}}), \quad
t\in(\frac{k}{n},\frac{k+1}{n}],\,k\in\N\cup\{0\}.
\]
Since that time  approximations of the above type are called
Wong-Zakai approximations. The case of nonreflected Stratonovich
SDEs driven by semimartingales with jumps has been  studied by
Mackevicius \cite{Ma}, Marcus \cite{Ma1,Mar} and in the general
case by Kurtz, Pardoux and Protter  \cite{KPP}. Stratonovich SDEs
considered in the last paper has  the following form:
\begin{align*}
X_t=X_0+\int_0^tf(X_{s})\circ dZ_s
=X_0&+\int_0^tf(X_{s-})\,dZ_s+\frac12\int_0^tf^{\prime}f
(X_{s-})\,d[Z]^c_s\\&+\sum_{s\leq t} \{\vf(f\Delta
Z_s,X_{s-})-X_{s-}-f(X_{s-})\Delta Z_s\},
\end{align*}
where  for given $g\in\cccc$, $\vf(g,x)$  denotes the value at
time  $u=1$  of the solution of the ordinary differential equation
$ {\frac{dy}{du}}(u)=g(y(u))$, $y(0)=x\in\Rd$. In particular,
Kurtz, Pardoux and Protter were able to prove that $\wh
X^{n}_t\arrowp X_t$ except possibly  for a countable set of $t'$s.
This convergence cannot be in general strengthened   to the
convergence in the  Skorokhod topology $J_1$  (in contrast to the
approximating sequence  $\{\wh X^n\}$, the solution $X$ need not
have continuous trajectories).

Reflected Stratonovich SDEs has been studied for the first time in
Doss and Priouret \cite{DP}. In \cite{DP} the convergence of
Wong--Zakai type approximations has been proved in the case of
diffusion processes and sufficiently smooth $\partial D$. Next,
the almost sure convergence of Wong--Zakai approximation has been
observed observed by Pettersson \cite{Pe} in the case of  convex
domain and constant diffusion coefficient. Pettersson's results
have been refined by Ren and Xu \cite{RX,RX1} in papers devoted to
multivalued SDEs, which correspond to SDEs with reflecting
boundary conditions on convex domains. Quite recently, Evans and
Stroock \cite{ES} have proved  weak convergence of Wong--Zakai
approximations in the case of not necessary smooth domains
satisfying  conditions (A), (B)  and (C) from the paper by Lions
and Sznitman \cite{LSz}. Their result has been strengthened to
$L^p$ convergence by Aida and Sasaki \cite{AS}. The  Wong-Zakai
approximation of reflected diffusion processes has also been  studied by
Zhang \cite{zh1,zh2}. We emphasize that in all the above mentioned
papers the limit of Wong--Zakai approximations is a continuous
solution to some reflected Stratonovich SDE.

In the present paper we remove  condition (C) from \cite{LSz} and
we only assume that $D$  satisfies conditions (A), (B).  We study the general reflected Stra\-ton\-ovich
SDEs driven by semimartingales with jumps (\ref{eq1.1}) and
various methods of its  approximations including Wong--Zakai type
approximations. Since the  Stratonovich stochastic integral
considered in this paper coincides with the general Stratonovich
stochastic integral studied in the paper by Kurtz, Pardoux and
Protter, our results generalize appropriate results from the above
mentioned papers. The problem of  existence and uniqueness of
solutions to reflected Stratonovich SDEs with jumps in domains
satisfying conditions (A), (B) has been also considered  by
Kohatsu-Higa \cite{KH} but the Stratonovich integral in \cite{KH}
is different from our integral. In \cite{KH} the integral  has the
property that its jumps place the solution inside the domain. This
implies that the compensating reflection process $K$ has
continuous trajectories and its variation is increasing only when the solution
$X$ is living on the boundary in a continuous manner. Using very
subtle and difficult methods based on the  multidimensional change
of time (see, e.g., \cite{Kur}) Kohatsu-Higa was able to prove the
existence and uniqueness results for such reflected SDEs, provided
that  the driving semimartingale $Z$ has summable jumps, i.e.
$\sum_{s\leq t}|\Delta Z_s|<\infty$, $P$-a.s., $t\in\Rp$.

The paper is organized as follows.

Section 2 presents some preliminaries concerning solutions  of the
Skorokhod problem and solutions of reflected SDEs.

In  Section 3 we study stability of reflected SDEs of the  form
(\ref{eq1.1}). More precisely, we consider a sequence of
semimartingales $\{Z^n\}$ satisfying the condition (UT) and a
sequence  $\{(X^n,K^n)\}$ of solutions of SDEs of the form
(\ref{eq1.1}), i.e.
\[
X^n_t=X^n_0+\int_0^tf(X^n_s)\circ dZ^n_s+K^n_t,\quad t\in\Rp.
\]
We  give conditions ensuring weak and strong  convergence  of
$\{(X^n,K^n)\}$  to  the solution $(X,K)$  of (\ref{eq1.1}).
Consequently, we get the existence of weak solution  of
(\ref{eq1.1}) provided that $f,f'f$ are continuous and bounded and
$|\Delta Z|<\ro/\sup_{x\in \bar D}||f(x)||$, where $\ro$ is some
constant depending on the domain $D$.  If additionally $f,f'f$ are
locally Lipschitz continuous, we prove the existence and
uniqueness of strong solutions to (\ref{eq1.1}).

Section 4 is devoted to the study of Wong-Zakai type
approximations of the unique strong solution $(X,K)$ of
(\ref{eq1.1}). We consider two approximation schemes. The first
one is defined by the recurrent formula: $\wh X^n_0=X_0$ and
\[
\wh X^n_t=\Pi_{\bar D}(\wh X^n_{\frac{k}{n}-})+n\int_{\frac{k}{n}}^tf(\wh
X_s^n)\,ds(Z_{\frac{k+1}{n}}-Z_{\frac{k}{n}}), \quad
t\in[\frac{k}{n},\frac{k+1}{n}),\,k\in\N\cup\{0\},
\]
where $\Pi_{\bar D}(x)$ is the projection of $x$ on $\bar D$.
Applying the approximation results from  Section 3 we show that
$\wh X^{n}\arrowp X$ in the $S$ topology introduced by Jakubowski
\cite{Jak},  and that $ \wh X^{n}_t\arrowp X_t$ provided  that
$\Delta Z_t=0$, $t\in\Rp$. The $S$ topology is weaker than the
Skorokhod topology $J_1$ but  stronger than the Meyer-Zheng
topology (see, e.g., \cite{Kur,MZ}).  In the general case our
convergence results cannot be  strengthened   to the convergence
in the Skorokhod topology $J_1$. However, in case  $Z$ has
continuous trajectories we prove that $\sup_{t\leq q}|\wh
X^n_t-X_t|\arrowp0$,  $q\in\Rp$. The second scheme has the
following form: $\bar X^n_0=X_0$ and
\[
\bar
X^{n}_t=\bar X_{\frac{k}{n}}+n\int_{\frac{k}{n}}^tf(\bar
X^{n}_{s})\,ds(Z_{\frac{k+1}{n}}-Z_{\frac{k}{n}})+\bar K^n_t-\bar
K^n_{\frac{k}{n}},\quad
t\in[\frac{k}{n},\frac{k+1}{n}),\,k\in\N\cup\{0\},
\]
where in each step some appropriate deterministic reflected
differential equation  is solved. This is well known method of
approximation of reflected diffusions (see, e.g.,
\cite{AS,DP,ES,Pe,RX,RX1}).  We show that  for any continuous
semimartingale $Z$,  $\sup_{t\leq q}|\bar X^n_t-X_t|\arrowp0$,
$q\in\Rp$. Unfortunately, this method is not applicable  in case
of equations of the form (\ref{eq1.1}) driven by semimartingales
with  jumps (see Remark \ref{rem4}).

We will use the  following  notation. $\Rp=[0,\infty)$. $\D$ is
the space of all c\`adl\`ag  mappings $x:\Rp\to\R^d$, i.e.
mappings which are right continuous and admit left-hands limits
equipped with the  Skorokhod $J_1$ topology. For $x\in\D$, $t>0$,
we write $x_{t-}=\lim_{s\uparrow t}x_s$, $\Delta x_t=x_t-x_{t-}$.
For $x$ with locally bounded variation we denote by $|x|_t$ its
total variation  on the interval $[0,t]$, i.e., $|x|_t = \sup_\pi
\sum_{j=1}^n |x_{t_j}-x_{t_{j-1}}| <+\infty$, where the supremum
is taken over all subdivisions $\pi=\{0=t_0<\ldots<t_n=t\}$ of
$[0,t]$  and $|\cdot|$ denotes the usual Euclidean norm in $\Rd$.
Every process $Z$ appearing in the sequel is assumed to have
c\`adl\`ag trajectories. If $Z=(Z^1,\dots,\,Z^d)$ is a
semimartingale then $[Z]_t$ stands for $\sum^d_{i=1}[Z^i]_t$ and
$[Z^i]$ stands for the quadratic variation process of $Z^i$,
$i=1,\dots,\,d$. Similarly, $\langle Z\rangle_t
=\sum^d_{i=1}\langle Z^i\rangle_t$ and $\langle Z^i\rangle$ stands
for  the predictable compensator of $[Z^i]$, $i=1,\dots,\,d$.
"$\arrowd$", "$\arrowp$" denote convergence in law and in
probability, respectively.

\nsubsection{Preliminaries}

Let  $D$  be a nonempty connected domain in $\Rd$. We define the
set ${\cal N}_x$  of inward normal unit vectors at  $x\in\partial
D$ by
\[
{\cal
N}_x=\bigcup_{r>0}{\cal N}_{x,r}\,,\,\,\,\,{\cal N}_{x,r}=\{\mbox{\bf
n}\in\Rd;\,|\mbox{\bf n}|=1\,,B(x-r\mbox{\bf n},\,r)\cap D=\emptyset
\},
\]
where $ B(z,r)=\{y\in\Rd;|y-z|<r\}$, $z\in\Rd$, $r>0$. Following
Lions and Sznitman \cite{LSz} and Saisho \cite{Sa} we  consider
two assumptions.
\begin{description}
\item[{\rm (A)}] There exists a constant
$\ro>0$  such that
\[
{\cal N}_x={\cal N}_{x,\ro}\neq\emptyset
\quad\mbox{\rm for every}\quad
x\in\partial D.
\]
\item[{\rm (B)}]
There exist constants  $\delta >0\,,\,\beta\geq 1$ such
that for every  $x\in\partial D$ there exists a unit vector $\mbox{\bf
l}_x$  with the following property
\[
<\mbox{\bf l}_x\,,\,\mbox{\bf n}>\,\geq\frac1{\beta}\quad\mbox{\rm for
every}\quad \mbox{\bf n}\in\bigcup_{y\in B(x,\delta)\cap\partial D}{\cal N}_y
\]
where $<\cdot,\cdot>$  denotes the usual inner product in $\Rd$.
\end{description}
\begin{remark}{\rm (\cite{LSz,Sa,Ta})
\label{rem2.1}
\begin{description}
\item[{\rm (i)}] {\bf n}$\in{\cal N}_{x,r} $ if and only if
$<y-x,\mbox{\bf n}>+\frac1{2r}|y-x|^2\geq 0 $  for every $y\in\bar
D$.
\item[{\rm (ii)}]
If $\mbox{\rm dist}(x,\bar D)\,<\,\ro$, $x\notin\bar D$  then
there exists a unique  $\Pi_{\bar D}(x)\in\bar D$ such that
$|x-\Pi_{\bar D}(x)|=\mbox{\rm dist}(x,\bar D)$. Moreover,
$(\Pi_{\bar D}(x)-x)/|\Pi_{\bar D}-x|\in{\cal N}_{ \Pi_{\bar
D}(x)}$
\item[{\rm (iii)}]
If  $D$  is a convex domain in  $\Rd$  then  $\ro=+\infty$.
\end{description}}
\end{remark}

Let $y\in\D$ and $y_0\in\bar D$. We recall that a
pair $(x,k)$ $\in\Dii $  is a solution of the Skorokhod problem
associated with  $y$ if
\begin{description}
\item[{\rm -}] $ x_t=y_t+k_t\,,\,t\in\Rp$,
\item[{\rm -}]$ x_t\in\bar D\,,\,t\in\Rp$,
\item[{\rm -}] $k$ is a function with bounded variation on each finite
interval such that $k_0=0$  and
\[
k_t=\int_0^t\mbox{\bf n}_s\,d|k|_s\,,\quad |k|_t=\int_0^t\mbox{\bf
1}_{\{x_s\in\partial D\}}\,d|k|_s\,,
\]
where $ \mbox{\bf n}_s\in{\cal N}_{x_s}$  if  $x_s\in\partial D$.
\end{description}

The problem of existence and approximation of solutions of the
Skorokhod problem in domains satisfying  conditions (A) and (B)
has been studied  by Saisho \cite{Sa} (the case of continuous
functions) and S\l omi\'nski \cite{Sl1} (the case of c\`adl\`ag
functions). We now recall the general approximation method
considered in these papers.

Let $\{\{\tk\}\}$ be an array of nonnegative numbers such that in
the $n^{\mbox{\tiny th}}$ row  the sequence $T_n=\{\tk\}$ forms a
partition of $\Rp$ such that $0=t^n_{0}<t^n_{1}<\dots$,
$\lim_{k\rightarrow\infty}\tk=+\infty$ and
\begin{equation}
\label{eq2.1} \max_k\,(\tk-\tkk)\longrightarrow0\quad
\mbox{\normalshape as}\,\,n\rightarrow+\infty.
\end{equation}
Given $\{\{\tk\}\}$  we define a sequence of summation rules
$\{\rho^n\},\,\rho^n:\Rp\lra\Rp$ by $\rho^n_t=\max\{\tk;\tk\leq
t\}$. For every $y\in\D$  the sequence $\{y\rn\}$ denotes the
following  discretizations of $y$:
\[
y\rn_t=y_{\rho^n_t}=y_{\tk}\quad\mbox{\normalshape  for}\quad
t\in[\tk,\tkn),\,k\in\N\cup\{0\},\,n\in\N.
\]
Using for instance \cite[Proposition 3.6.5]{EK} one can check that
$y\rn\to y$ in $\D$. From \cite[Corollary 3]{Sl1} it follows that
if (A)  and (B) are satisfied then for every $y\in\D$  such that
$y_0\in\bar D$ and $|\Delta y|<\ro$ there exists a unique solution
$(x,k)$ of the Skorokhod problem associated with $y$. Moreover, if
$\{(x^n,k^n)\}$ is the sequence of solutions of the Skorokhod
problem associated with the sequence of discretizations $\{y\rn\}$
then
\begin{equation}
\label{eq2.2}(x^n,k^n,y\rn)\lra(x,k,y)
\quad\mbox{\rm in}\,\,\Dddd.
\end{equation}
It is worth noting that for all sufficiently large $n$,
$(x^n,k^n)$ are defined by the following recurrent formula:
$x^n_0=y_0$, $k^n_0=0$,
\[
\left\{\begin{array}{ll}
x^n_{\tkn}&=\Pi_{\bar D}\big(x^n_{\tk}+(y_{\tkn}-y_{\tk})\big),\\
k^n_{\tkn}&=k^n_{\tk}+(x^n_{\tkn}-x^n_{\tk})-(y_{\tkn}-y_{\tk})
\end{array}
\right.
\]
and $x^n_t=x^n_{\tk}$, $k^n_t=k^n_{\tk}$,  $t\in[\tk,\tkn)$,
$k\in\N\cup\{0\}$.
\begin{lemma}
\label{lem1} Assume (A) and (B). Let $y\in\D$ be such that
$y_0\in\bar D$, $|\Delta y|<\ro$,  and let $(x,k)$ denote the
solution of the  Skorokhod problem associated with $y$. Then for
every $0\leq t<q$
\[
|k|_{[t,q]}\leq|y|_{[t,q]}\quad and \quad |x|_{[t,q]}\leq 2|y|_{[t,q]}.
\]
\end{lemma}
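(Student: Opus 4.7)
The plan is to use the discrete approximation scheme (\ref{eq2.2}) recalled above. The bound on $|x|$ is immediate from the triangle inequality $|x|_{[t,q]}\leq|y|_{[t,q]}+|k|_{[t,q]}\leq 2|y|_{[t,q]}$ once the bound on $|k|$ is established, so the main task is to show $|k|_{[t,q]}\leq|y|_{[t,q]}$.

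For each $n$ large enough that $\max_k|y_{\tkn}-y_{\tk}|<\ro$ (which holds for all sufficiently large $n$ by (\ref{eq2.1}) and the assumption $|\Delta y|<\ro$), the recurrent formula yields
$$
|k^n_{\tkn}-k^n_{\tk}|=\mbox{\rm dist}(x^n_{\tk}+(y_{\tkn}-y_{\tk}),\bar D)\leq|y_{\tkn}-y_{\tk}|,
$$
since $x^n_{\tk}\in\bar D$ and the projection realises the distance to $\bar D$. Summing these single-step bounds over the grid jumps of $k^n$ inside $(a,b]$ and invoking additivity of the total variation of $y$ produces $|k^n|_{[a,b]}\leq|y|_{[a_\ast,b]}$, where $a_\ast$ denotes the largest grid point $\leq a$.

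Next I would pass to the limit using (\ref{eq2.2}). Suppose first that $a$ and $b$ are continuity points of both $k$ and $y$. Then $k^n_s\to k_s$ at every continuity point $s$ of $k$, and the standard lower-semicontinuity argument gives $\sum_j|k_{s_j}-k_{s_{j-1}}|\leq\liminf_n|k^n|_{[a,b]}$ for any partition $a=s_0<\dots<s_N=b$ through continuity points of $k$. Combined with the discrete bound and the convergence $|y|_{[a_\ast,b]}\to|y|_{[a,b]}$ (valid because $a$ is a continuity point of $y$), taking the sup over such partitions yields $|k|_{[a,b]}\leq|y|_{[a,b]}$. For arbitrary $0\leq t<q$, pick continuity points $a\downarrow t$, $b\downarrow q$ of both $k$ and $y$; since the variation functionals $(a,b)\mapsto|k|_{[a,b]}$ and $(a,b)\mapsto|y|_{[a,b]}$ are right-continuous in each endpoint (a standard property of c\`adl\`ag functions of bounded variation), passing to the limit gives $|k|_{[t,q]}\leq|y|_{[t,q]}$.

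The main obstacle is precisely this passage to the limit: lower semicontinuity of the total variation under $J_1$ convergence holds cleanly only at continuity points of the limit, so extending to arbitrary endpoints requires combining it with right-continuity of the variation functional and density of continuity points. Everything else is a direct consequence of the recurrent formula for the discretised Skorokhod problem.
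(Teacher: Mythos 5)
Your proof is correct and follows essentially the same route as the paper's: the one-step projection bound $|k^n_{\tkn}-k^n_{\tk}|\leq|y_{\tkn}-y_{\tk}|$ for the discretized Skorokhod problem, summation, and lower semicontinuity of the variation under the convergence $(x^n,k^n,y^{\rho^n})\to(x,k,y)$ in $\DD$. The only cosmetic difference is that the paper reduces to $t=0$ by a ``without loss of generality'' and then handles only the right endpoint by choosing $q_i\downarrow q$ with $\Delta y_{q_i}=0$, whereas you treat both endpoints explicitly via continuity points and right-continuity of the variation functional; your version is, if anything, a little more careful.
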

\begin{proof}
Without loss of generality we may assume that $t=0$.  Let
$\{(x^n,k^n)\}$ be a sequence of solutions of the Skorokhod
problem associated with the sequence of discretizations
$\{y\rn\}$. Clearly,
\[
|\Delta k^n_{\tkn}|=|k^n_{\tkn}-k^n_{\tk}|
\leq |y_{\tkn}-y_{\tk}|=|\Delta y\rn_{\tkn}|,\quad k\in\N\cup\{0\},
\]
which implies that for every $q\in\Rp$,
\[
|k^n|_q\leq\sum_{k;\tkn\leq q}|k^n_{\tkn}-k^n_{\tk}|
\leq \sum_{k;\tkn\leq q}|y_{\tkn}-y_{\tk}|=| y\rn|_{q}|.
\]
Since $k^n\to k$ in $\D$,
\[
|k|_q\leq\liminf_{n\to\infty}|k^n|_q
\leq\sup_n|y\rn|_q\leq|y|_q,\quad \mbox{\rm provided
that}\,\,\Delta y_q=0.
\]
If $\Delta y_q\neq0$ then there exists a sequence~$\{q_i\}$ such
that $q_i\downarrow q$ and $\Delta y_{q_i}=0$, $i\in\N$. Then
$|k|_{q_i}\leq|y|_{q_i}$, $i\in\N$,  so letting $i\to\infty$ we
obtain the desired result.
\end{proof}
\medskip

Lemma \ref{lem1} is a  generalization  of
\cite[Theorem 2.1]{LSz}, where the case of continuous $y$ and smooth domains is considered. In \cite{AS} estimates of Lemma \ref{lem1} but with greater constants were proved for continuous $y$ and domains satisfying condition (A) only.

Let  $(\Omega\,,\,\cal F\,,\,\cal P)$  be a probability space and
let $({\cal F}_t)$  be a filtration on  $(\Omega\,,\,\cal
F\,,\,\cal P)$ satisfying the usual conditions. Let  $Y$  be an
$({\cal F}_t)$  adapted process and  $Y_0\in\bar D$. We say that a
pair  $(X,K)$  of  $({\cal F}_t)$ adapted processes solves the
Skorokhod problem associated with   $Y$  if and only if  for every
$\omega\in\Omega\,,\,(X(\omega),K(\omega))$ is a solution of the
Skorokhod problem corresponding to  $Y(\omega)$. Let us note that
by \cite[Corollary 6.10]{Sl1} for every process $Y$ such that
$Y_0\in\bar D$ and $|\Delta Y|<\ro$ there exists a unique solution
of the Skorokhod problem associated with $Y$.

We will consider processes  $Y, Y'$  admitting the decompositions
\begin{equation}
\label{eq2.3} Y_t=H_t+M_t+V_t,\quad  Y'_t=H_t+  M'_t+ V'_t, \quad
t\in\Rp,
\end{equation}
 where $H$ is an $({\cal F}_t)$ adapted  process,
$M,M'$  are $({\cal F}_t)$ adapted local martingale with
$M_0=M'_0=0$ and $V, V'$ are $({\cal F}_t)$ adapted processes of
bounded variation with $V_0= V'_0=0$.

\begin{remark}{\rm(\cite{Sl1,Sl2})
\label{rem2} Assume that $Y, Y'$ admit decompositions
(\ref{eq2.3}) and $Y_0, Y'_0\in\bar D$. Let $(X,K)$, $(X',K')$
denote solutions of the  Skorokhod problems associated with
processes $Y$ and $Y'$, respectively. If $|\Delta Y|,|\Delta Y'|$
$\leq\frac{\ro}4 $  and there exists a constant $a$  such that
$|K|_{\infty},| K'|_{\infty}\leq a$ (in the case $r_0<\infty$)
then for every $p\in\N$ there exists a constant $ C(p)$ depending
on  $p$  (and also on  $a,\,\ro\,,\,\beta\,,\,\delta\,$ ) such
that
\begin{equation}
\label{eq2.4}E\sup_{t \leq\tau}|X_t- X'_t|^{2p} \leq C(p)
E([M-M']_{\tau }^p + |V-V'|_{\tau }^{2p})
\end{equation}
and
\begin{equation}\label{eq2.5}
 E\sup_{t < \tau}|X_t- X'_t|^{2p} \leq C(p)
E( [M-M']_{\tau -}^p + |V- V'|_{\tau -}^{2p} + \langle M- M'
\rangle ^p_{\tau -})
\end{equation}
for every stopping time $\tau$. }
\end{remark}

Let $X_0\in\bar D$ and let $Z_t=(Z^1_t,\dots,Z^d_t)\,$  be an
$({\cal F}_t)$  adapted semimartingale such that $Z_0=0$. Given
$f:\ov D\lra\Rd\otimes\Rd$, $f(x)=\{f_{ik}\}_{i,k=1,\dots,d}$ such
that $f$  belongs to $\ccc$ we consider  reflected Stratonovich
SDE  of the form
\begin{align}
X_t&=X_0+\int_0^tf(X_{s})\circ dZ_s+K_t\label{eq2.6}\\
&=X_0+\int_0^tf(X_{s-})\,dZ_s+\frac12\int_0^t
f^{\prime}f(X_{s-})\,d[Z]^c_s\nonumber\\
&\quad+\sum_{s\leq t} \{\vf(f\Delta
Z_s,X_{s-})-X_{s-}-f(X_{s-})\Delta Z_s\}+K_t\nonumber,
\end{align}
where  for given  $g\in\cccc$,  $\vf(g,x)$  denotes the value
at time  $u=1$  of the solution of
the following ordinary differential equation:
\[
{\frac{dy}{du}}(u)=g(y(u));\quad y(0)=x\in\Rd.
\]
Equivalently, for  $i=1,\dots,d$,
\begin{align*}
X^i_t&=X^i_0+\sum_{j=1}^d\int_0^tf_{ij}(X_{s-})\,dZ^j_s
+\frac12\sum_{j=1}^d\sum_{l=1}^d\sum_{m=1}^d
\int_0^t\frac{\partial f_{ij}}{\partial x_l}f_{lm}(X_{s-})\,d[Z^j,Z^m]^c_s\\
&\quad+\sum_{s\leq t}\{\vf^i(f\Delta
Z_s,X_{s-})-X^i_{s-}-\sum_{j=1}^d f_{ij}(X_{s-})\Delta
Z^j_s\}+K^i_t,\quad t\in\Rp.
\end{align*}
Note that in case $Z$ has continuous trajectories the definition
of Stratonovich stochastic integral given above  coincides with
the well known definition given by Meyer \cite{Me1}.

We say that the SDE (\ref{eq2.5})  has a strong solution if there
exists a pair $(X,K)$  of  $({\cal F}_t)$ adapted processes which
solves the Skorokhod problem associated with
\[
Y_t=X_0+\int_0^tf(X_{s})\circ dZ_s,\quad t\in\Rp.
\]

In the sequel we say that $f,f^{\prime}f$ have some property if
the coefficients $f_{ij}$, $\frac{\partial f_{ij}}{\partial
x_l}f_{lm}$  have this property for
 $i,j,l,m$ $=1,\dots,d$.

\begin{remark}\label{rem3}
{\rm (a) The existence and uniqueness  of strong solutions of
usual SDEs driven by semimartingales  with reflecting  boundary on
domains satisfying conditions (A) and (B) has been studied in
detail in \cite{Sl1}.  In particular, in \cite[Theorem 5]{Sl1} it
is proved that if $f$  is bounded and Lipschitz continuous,
$X_0\in\bar D$ and $|\Delta Z|<\ro/L$, where $L=\sup_{x\in\bar
D}\|f(x)\|$, then there exists a unique strong solution of the
reflecting SDE of the form
\[X_t=X_0+\int_0^t f(X_{s-})\,dZ_s +K_t,\quad t\in\Rp.\]
Since (\ref{eq2.6}) has the additional term $\sum_{s\leq t}
\{\vf(f\Delta Z_s,X_{s-})-X_{s-}-f(X_{s-})\Delta Z_s\}$,
$t\in\Rp$,  \cite[Theorem 5]{Sl1} does not apply to  Stratonovich
SDE with reflection.

(b) If $f'f$ is bounded then be results proved in \cite[page 356]{KPP} there
is  $C>0$ such that for every $s\Rp$ we have $|\vf(f\Delta
Z_s,X_{s-})-X_{s-}-f(X_{s-})\Delta Z_s|\leq C|\Delta Z_s|^2$.
Hence
\[
\sum_{s\leq t}
|\vf(f\Delta Z_s,X_{s-})-X_{s-}-f(X_{s-})\Delta Z_s|\leq
C\sum_{s\leq t}|\Delta Z_s|^2, \quad t\in\Rp,
\]
which means that the additional term is finite a.s. One can
observe that as in the nonreflected case  considered in
\cite{KPP}, $(X,K)$ is a strong solution of (\ref{eq2.6}) iff it
satisfies
\begin{align}
X_t&=X_0 +\int_0^tf(X_{s-})\,dZ_s+\frac12\int_0^t
f^{\prime}f(X_{s-})\,d[Z]^c_s
\label{eq2.7}\\
& \,\,+\int_0^th(s,\cdot,X_{s-})d[Z]^d_s +K_t,\quad
t\in\Rp,\nonumber
\end{align}
where $h(s,\omega,x)=(\vf(f\Delta Z_s,x)-x-f(x)\Delta Z_s)/|\Delta
Z_s|^2$, $s\in\Rp$, $x\in\bar D$ is bounded by the constant $C$.
Moreover, if $f,f'f$ are Lipschitz continuous and the jumps of $Z$
are bounded, then there is another constant $C'$ such that
\begin{equation}\label{eq2.8} |h(s,\omega,x)-h(s,\omega,y)|
\leq C'|x-y|,\quad x,y\in\bar D, \omega\in\Omega.
\end{equation}
(see \cite[Lemma 3.1]{KPP}). }
\end{remark}

We  say that (\ref{eq2.6}) has the pathwise uniqueness property if
for any probability space
$(\bar{\Omega},\bar{\mathcal{F}},\bar{P})$ with filtration
$({\bar{\cal F}}_t)$  and any $\bar{X}_0\in\bar D$ and
$({\bar{\cal F}}_t)$-adapted semimartingale $\bar Z$  such that
${\cal L}({\bar{X}_0,\bar Z})={\cal L}({X_0,Z})$, ${P}[(\bar
X_t,\bar K_t)=(\bar X^{\prime}_t,\bar
K^{\prime}_t)\,,t\in\Rp\,]=1$ for any two  $(\bar{\cal F}_t)$
adapted strong solutions $(\bar X,\bar K),\,(\bar X^{\prime},\bar
K^{\prime})$ of (\ref{eq2.6}) on $(\,\bar{\Omega},\,\bar{\cal
F},\,\bar{\cal P})$.

We will consider the following assumptions.
\begin{enumerate}
\item[(F)] $f\in\ccc$, $\,\,f,f'f$ are bounded.
\item[($\Delta$)] $|\Delta Z|<\ro/\sup_{x\in\bar D}\|f(x)\|$.
\end{enumerate}

\begin{lemma} \label{lem2}
Assume (A),(B) and (F),($\Delta$). If $f,f^{\prime}f$ are locally
Lipschitz continuous,
  then the SDE (\ref{eq2.6})
has the pathwise uniqueness property.
\end{lemma}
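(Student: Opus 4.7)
The plan is to reduce pathwise uniqueness for (\ref{eq2.6}) to the Skorokhod-problem stability estimate of Remark \ref{rem2}, combined with a localization and a Gronwall argument. Let $(\bar X,\bar K)$ and $(\bar X',\bar K')$ be two $({\cal F}_t)$-adapted strong solutions of (\ref{eq2.6}) with common initial data. By Remark \ref{rem3}(b), both pairs solve the Skorokhod problem associated with
\[
Y_t=X_0+\int_0^tf(X_{s-})\,dZ_s+\tfrac12\int_0^tf'f(X_{s-})\,d[Z]^c_s+\int_0^th(s,\cdot,X_{s-})\,d[Z]^d_s
\]
and its analogue $Y'$ built from $X'$. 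First I would write $Z=M^Z+V^Z$ as the sum of a local martingale with bounded jumps and an adapted bounded variation process (possible after truncation of the large jumps of $Z$), thereby placing $Y,Y'$ in the form (\ref{eq2.3}) with common $H\equiv X_0$, martingale part $M=\int f(X_{s-})\,dM^Z_s$ and a finite variation part $V$ collecting $\int f(X_{s-})\,dV^Z_s$, $\tfrac12\int f'f(X_{s-})\,d[Z]^c_s$ and $\int h(s,\cdot,X_{s-})\,d[Z]^d_s$, and analogously for $M',V'$.

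To invoke Remark \ref{rem2} and to use the local Lipschitz hypothesis, I would introduce the localizing stopping times
\[
\sigma_n=\inf\{t\geq0:\,|X_t|\vee|X'_t|\geq n\,\text{ or }\,|K|_t\vee|K'|_t\geq n\}\wedge\tau^J_n,
\]
where $\tau^J_n$ stops just before the $n$-th jump of $Z$ exceeding a threshold $\varepsilon>0$ chosen small enough that $|\Delta Y|\vee|\Delta Y'|\leq\ro/4$ on $[0,\sigma_n)$. Such an $\varepsilon$ exists because $\Delta Y_t=f(X_{t-})\Delta Z_t+h(t,\cdot,X_{t-})|\Delta Z_t|^2$ with $h$ bounded and $f$ bounded by $L$, so small enough $|\Delta Z|$ keeps $|\Delta Y|$ below $\ro/4$, while jumps of $Z$ of size $>\varepsilon$ are locally finite; together with $(\Delta)$ this forces $\sigma_n\uparrow\infty$ a.s. On $[0,\sigma_n]$ both solutions remain in $\bar D\cap B(0,n)$, hence $f$, $f'f$ are (globally) Lipschitz there and, by (\ref{eq2.8}) applied with the jumps of $Z$ truncated at $\varepsilon$, so is $h(s,\cdot,\cdot)$ in its spatial argument, with a constant depending only on $n$.

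With this setup I would apply (\ref{eq2.5}) with $p=1$ and $\tau=\sigma_n$ to the stopped processes. Using the Lipschitz bounds on $f$, $f'f$, $h$, together with the Kunita--Watanabe inequality for the brackets and Jensen's inequality for $|V-V'|^2$, each of $E[M-M']^1_{\sigma_n-}$, $E|V-V'|^2_{\sigma_n-}$ and $E\langle M-M'\rangle_{\sigma_n-}$ is bounded by
\[
C_n\,E\int_0^{\sigma_n-}\sup_{u\leq s}|X_u-X'_u|^2\,d\Phi_s,
\]
for an increasing process $\Phi$ built from $[M^Z]$, $|V^Z|$, $[Z]^c$ and $\sum|\Delta Z|^2$, all a.s.\ finite on $[0,\sigma_n)$. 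A standard Gronwall argument for c\`adl\`ag integrators then forces $X=X'$ on $[0,\sigma_n)$, and uniqueness of the Skorokhod problem noted after (\ref{eq2.3}) gives $K=K'$ there. Letting $n\to\infty$ yields the claim.

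The main obstacle is the localization bookkeeping: one must simultaneously bound $|X|,|X'|,|K|,|K'|$ (to supply the constant $a$ of Remark \ref{rem2} and to pass from local to global Lipschitz) and bring $|\Delta Y|,|\Delta Y'|$ below $\ro/4$ (so that Remark \ref{rem2} actually applies). It is precisely the Lipschitz estimate (\ref{eq2.8}) on the jump-correction term $h$, which holds only once the jumps of $Z$ are bounded, that forces the truncation of the large jumps of $Z$ into the construction of $\sigma_n$.
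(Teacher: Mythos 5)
Your proposal follows essentially the same route as the paper's proof: rewrite (\ref{eq2.6}) in the form (\ref{eq2.7}) via Remark \ref{rem3}(b), decompose $Z$ into a local martingale plus a bounded-variation part, localize with stopping times controlling $|X|,|X'|,|K|,|K'|$ and the brackets so that the local Lipschitz bounds on $f$, $f'f$ and $h$ become global, apply the $p=1$ case of (\ref{eq2.5}) from Remark \ref{rem2}, and conclude with the stochastic Gronwall lemma. If anything you are more explicit than the paper about the hypothesis $|\Delta Y|,|\Delta Y'|\le \ro/4$; the only bookkeeping point to fix is that with your $\sigma_n$ the interval $[0,\sigma_n)$ may already contain $n-1$ large jumps, so one should iterate the Gronwall step between consecutive large-jump times (crossing each such time by uniqueness of the Skorokhod problem) rather than apply (\ref{eq2.5}) once over the whole interval.
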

\begin{proof}
Without loss of generality we  may consider solutions of
(\ref{eq2.6}) on  a probability space $(\Omega,\,\cal F,\cal P)$
with  a filtration $({\cal F}_t)$. Since $Z$ has bounded jumps, it
is a special semimartingale. Therefore $Z$ admits decomposition of
the form $Z_t=M_t+V_t$, $ t\in\Rp$, where  $M$ is an $({\cal
F}_t)$ adapted local martingale and $V$ is an $({\cal F}_t)$
predictable processes of bounded variation such that $M_0=V_0=0$
and $|\Delta V_t|\leq \ro/L$, $|\Delta M_t|\leq (2\ro)/L$, where
$L=\sup_{x\in\bar D}\|f(x)\|$. Let $( X, K),\,( X^{\prime},
K^{\prime})$ be two solutions of (\ref{eq2.6}). Set
\[
\tau_a=\inf\{t;\max(|X_t|,|X'_t|,|K|_t,|K'|_t,[M]_t,\langle
M\rangle_t, |V|_t)>a\},\quad a\in\N.
\]
We will show that $(X, K)=( X^{\prime}, K^{\prime})$ on any
interval $[0,\tau_a]$, $a\in\N$. Using the fact that
$|\Delta\int_0^tf(X_{s})\circ dZ_s|<r_0$  there is a constant
$C_a>0$ such that $\sup_{t\leq\tau_a}|X_t|$,
$\sup_{t\leq\tau_a}|X'_t|$, $|K|_{\tau_a}$, $|K'|_{\tau_a}$,
$[M]_{\tau_a}$, $\langle M\rangle_{\tau_a}$, $ |V|_{\tau_a}$ $\leq
C_a$. Moreover, $[Z]_{\tau_a}\leq2[M]_{\tau_a}+2|V|^2_{\tau_a}\leq
2C_a+2C_a^2$. By (\ref{eq2.5}) and Lipschitz continuity of
coefficients on $B(0,a)$, for any stopping time $\sigma$ we have
\begin{align*}
&E\sup_{t < \sigma}|X^{\tau_a}_t- X'^{\tau_a}_t|^{2} \leq C(2,a)
E\big([\int_0^\cdot(f(X_{s-})-f(X'_{s-}))\,dM^{\tau_a}_s]_{\sigma-}\\
&\qquad+\langle\int_0^\cdot
(f(X_{s-})-f(X'_{s-}))\,dM^{\tau_a}_s\rangle_{\sigma-}+ |
\int_0^\cdot
(f(X_{s-})-f(X'_{s-}))\,dV^{\tau_a}_s|_{\sigma-}\\
&\qquad+|\frac12\int_0^\cdot
(f^{\prime}(f(X_{s-})-f(X'_{s-}))\,d[Z^{\tau_a}]^c_s
|_{\sigma-}\\
&\qquad+|\int_0^\cdot
(h(s,\cdot,X_{s-})-h(s,\cdot,X'_{s-}))d[Z^{\tau_a}]^d_s|_{\sigma-}\big)\\
&\quad\leq
C(2,a)E\int_0^{\sigma-}\sup_{u<s}|X_u-X'_u|\,d([M^{\tau_a}]+\langle
M^{\tau_a}\rangle+|V^{\tau_a}|^2)_s.
\end{align*}
Using this and the stochastic version of Gronwall's lemma (see,
e.g. \cite[Lemma 2]{Ma} or \cite[Lemma C1]{Sl3}) we conclude that
$E\sup_{t }|X^{\tau_a}_t- X'^{\tau_a}_t|^{2}=0$, which implies
that $(X, K)=( X^{\prime}, K^{\prime})$ on $[0,\tau_a]$. Since
$\tau_a\nearrow\infty$, $P$-a.s., the lemma follows.
\end{proof}

\nsubsection{Stability of reflected Stratonovich SDEs}

Let $\{Z^n\}$  be a sequence of semimartingales defined possibly
on different probability spaces   $(\Omega^n, {\cal F}^n, {\cal
P}^n\,)$ and adapted to different filtrations $({\cal F}^n_t).$ We
will assume that $\{Z^n\}$ satisfies  condition  (UT) introduced
in Stricker \cite{St}, which  have appeared to be useful in the
theory of limit theorems for stochastic integrals and for
solutions of SDEs (see, e.g.,
 \cite{JMP,KP,MS,Sl,Sl1}).  We recall that $\{Z^n\}$ satisfies (UT) if
\begin{description}
\item[{(UT)}] for every  $q\in\Rp$ the
family of random variables
\[
\{\int_{[0,q]} V^n_s\,dZ^n_s;\,n\in\N\,,\,V^n\in\mbox{\bf
V}^n_q\}\quad \mbox{\rm is tight in $\R$,}
\]
where $ \mbox{\bf V}^n_q$  is the class of all discrete
predictable processes of the form $V^n_s=V^n_0+\sum_{i=0}^k
V^n_i\mbox{\bf 1}_{\{t_i<s\leq t_{i+1}\}}$  such that
$0=t_0<t_1<\ldots<t_k=q$ and every $V^n_i$  is  ${\cal F}^n_{t_i}$
measurable, $|V^n_i|\leq 1$  for every $i\in\N\cup\{0\}\,,\,n\in
\N,\,k\in\N.$
\end{description}

Let $\{(X^n,K^n)\}$ be  a sequence of  strong solutions of
(\ref{eq2.6}) driven by  $\{Z^n\}$,  i.e.
\begin{align}
X^n_t&=X_0^n+\int_0^tf(X^n_{s})\circ dZ^n_s+K^n_t\label{eq3.1}\\
&=X_0^n+\int_0^tf(X^n_{s-})\,dZ^n_s+\frac12
\int_0^tf^{\prime}f(X^n_{s-})\,d[Z^n]^c_s\nonumber\\
&\quad+\sum_{s\leq t}\{\vf(f\Delta
Z^n_s,X^n_{s-})-X^n_{s-}-f(X^n_{s-}) \Delta
Z^n_s\}+K^n_t,\nonumber\quad t\in\Rp.
\end{align}

We say that the  SDE (\ref{eq2.6}) has a weak solution if there
exists a probability space $(\,\wh{\Omega}\,,\wh{\cal
F}\,,\wh{\cal P}\,)$ with filtration $(\wh{\cal F}_t)$  satisfying
the usual conditions  and an $(\wh{\cal F}_t)$  adapted processes
$\wx,\wk,\wz$ such that ${\cal L}(\wh X_0,\wz)={\cal L}(X_0,Z)$
and  (\ref{eq2.6}) holds  for processes   $\wx,\wk,\wz$  instead
of $X,K,Z\,$. If ${\cal L}((\wx,\wk))= {\cal
L}((\wx^{\prime},\wk^{\prime}))$ for any two weak solutions
$(\wx,\wk)$, $(\wx^{\prime},\wk^{\prime})$ of (\ref{eq2.6}),
possibly defined on two different probability spaces, then we  say
that (\ref{eq2.6}) has the weak uniqueness property.

\begin{theorem}
\label{thm1} Assume (A),(B) and (F), ($\Delta$). Let $\{Z^n\}$ be
a sequence of ${\cal F}^n$  adapted semimartingales satisfying
(UT),  $\{X^n_0\}$ be a sequence of ${\cal F}^n_0$ measurable
random variables  such that $X^n_0\in\bar D$ for $n\in\N$ and let
\[
(X^n_0,Z^n)\arrowd(X_0,Z)\quad  in \,\,\Rd\times\D.
\]
If $\{(X^n,K^n)\}$  is a sequence of strong solutions of the  SDE
(\ref{eq3.1}), then
\begin{description}\item[{\rm (i)}] $\{X^n\}$ satisfies (UT) and
$\{|K^n|_q\}$  is bounded in probability, $q\in\Rp$,
\item[{\rm (ii)}] $\{(X^n,K^n))\}$  is tight in
$\Ddd$  and its every limit point  is a weak solution of
(\ref{eq2.6}),
\item[{\rm (iii)}]if moreover (\ref{eq2.6})  has the weak uniqueness
property and $(X,K)$ is its weak solution  then
\[
\,(X^n,K^n)\,\arrowd\,\,(X,K)\,\quad  in\quad \Ddd.
\]
\end{description}
\end{theorem}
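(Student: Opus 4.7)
The starting point is to rewrite (\ref{eq3.1}) in the equivalent semimartingale form (\ref{eq2.7}):
\[
X^n_t = X^n_0 + \int_0^t f(X^n_{s-})\,dZ^n_s + \tfrac{1}{2}\int_0^t f'f(X^n_{s-})\,d[Z^n]^c_s + \int_0^t h^n(s,\cdot,X^n_{s-})\,d[Z^n]^d_s + K^n_t,
\]
where by Remark~\ref{rem3}(b) each $h^n$ is bounded by a common constant $C$. Denote by $Y^n$ the right-hand side with $K^n_t$ removed.

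\textbf{Part (i) and tightness in (ii).} Since $f$ is bounded and $\{Z^n\}$ satisfies (UT), the stochastic integral $\int_0^\cdot f(X^n_{s-})\,dZ^n_s$ satisfies (UT) by the Jakubowski--M\'emin--Pag\`es criterion. The remaining two integrals defining $Y^n$ are of bounded variation on $[0,q]$ controlled by $\bigl(\tfrac{1}{2}\|f'f\|_\infty + C\bigr)[Z^n]_q$; the (UT) property of $\{Z^n\}$ forces $\{[Z^n]_q\}$ to be bounded in probability, so these terms also satisfy (UT). Hence $\{Y^n\}$ satisfies (UT) and $\{|Y^n|_q\}$ is bounded in probability, while $|\Delta Y^n|\leq\ro$ by $(\Delta)$. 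Lemma~\ref{lem1} then yields $|K^n|_q\leq|Y^n|_q$, which gives the boundedness in probability of $\{|K^n|_q\}$; consequently $\{X^n\}=\{Y^n+K^n\}$ satisfies (UT) as well. Tightness of $\{(Z^n,X^n,K^n)\}$ in $\Diii$ follows from the standard tightness criterion for (UT) semimartingales, combined with the uniform jump bound.

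\textbf{Identification of the limit (the main obstacle).} Along any convergent subsequence I apply Skorokhod's representation to obtain a.s.\ convergence $(Z^n,X^n,K^n)\to(Z,X,K)$ in $\Diii$ on a common space. The Kurtz--Protter convergence theorem for stochastic integrals under (UT) handles $\int_0^\cdot f(X^n_{s-})\,dZ^n_s$, and continuity of $f'f$ together with the joint a.s.\ convergence of $[Z^n]^c$ to $[Z]^c$ handles the second integral. The genuinely delicate term is the jump correction $\int_0^\cdot h^n(s,\cdot,X^n_{s-})\,d[Z^n]^d_s$: since $h^n$ depends on $\omega$ through $\Delta Z^n_s$, joint convergence of integrand and (random) integrating measure is required. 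My plan is to adapt the device of Kurtz--Pardoux--Protter \cite{KPP}: rewrite this term as $\sum_{s\leq t}\{\vf(f\Delta Z^n_s,X^n_{s-})-X^n_{s-}-f(X^n_{s-})\Delta Z^n_s\}$, exploit the continuous dependence of $\vf(g,x)$ on $(g,x)$ on bounded sets (guaranteed by $f\in\ccc$), and match jumps of $Z^n$ with jumps of $Z$ via the $J_1$-convergence, with $(\Delta)$ keeping everything strictly below $\ro/L$. Continuity of the Skorokhod map on c\`adl\`ag inputs with jumps bounded away from $\ro$ (cf.\ (\ref{eq2.2})) then forces $(X,K)$ to solve the Skorokhod problem for the limiting driver $Y_t = X_0+\int_0^t f(X_s)\circ dZ_s$, so $(X,K)$ is a weak solution of (\ref{eq2.6}). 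This step is the main obstacle, because the Stratonovich jump correction is not continuous under naive $J_1$-limits and requires the simultaneous control of jump times, jump sizes, and the pre-jump values $X^n_{s-}$.

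\textbf{Part (iii).} A standard tightness-plus-uniqueness argument closes the proof: under the weak uniqueness property, every subsequential weak limit of $\{(X^n,K^n)\}$ coincides in law with $(X,K)$, and tightness from (ii) then upgrades this to convergence in law of the full sequence in $\Ddd$.
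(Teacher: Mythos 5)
Your overall architecture (rewrite (\ref{eq3.1}) in the form (\ref{eq2.7}) with the bounded kernels $h^n$, establish (UT) for the drivers $Y^n$, get tightness, identify limits, then a tightness-plus-uniqueness argument for (iii)) coincides with the paper's. However, there is one step in part (i) that fails as written, and the identification step is left at the level of a plan precisely where the paper has to work hardest.

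The failing step: you bound $|K^n|_q$ by invoking Lemma \ref{lem1} in the form $|K^n|_q\leq|Y^n|_q$ and claim $\{|Y^n|_q\}$ is bounded in probability. Here $|Y^n|_q$ is the \emph{total variation} of $Y^n$ on $[0,q]$, and $Y^n$ contains the stochastic integral $\int_0^\cdot f(X^n_{s-})\,dZ^n_s$; whenever $Z^n$ has a nontrivial martingale part this total variation is a.s.\ infinite, so Lemma \ref{lem1} gives a vacuous bound and the claimed boundedness in probability of $\{|Y^n|_q\}$ is simply false. Lemma \ref{lem1} is a pathwise estimate for finite-variation inputs and cannot control the compensating process of a Skorokhod problem driven by a genuine semimartingale. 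The correct tool, and the one the paper uses, is \cite[Proposition 3]{Sl1}: once $\{Y^n\}$ satisfies (UT) (which you do establish, via \cite[Lemma 1.6]{MS} for the $dZ^n$ integral and the boundedness in probability of $\{[Z^n]_q\}$ for the two correction terms), that proposition directly yields that $\{|K^n|_q\}$ is bounded in probability and that $\{X^n\}$ satisfies (UT). Your conclusion of (i) is right, but it does not follow from the estimate you wrote down.

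On the identification of the limit in (ii): you correctly single out the jump-correction term $\sum_{s\leq t}\phi(\Delta Z^n_s,X^n_{s-})$ with $\phi(y,x)=\vf(fy,x)-x-f(x)y$ as the obstacle and propose the Kurtz--Pardoux--Protter device, which is indeed the paper's route. But the proposal stops at ``match jumps via $J_1$-convergence.'' What actually makes this work, and what is missing from your sketch, is the two-sided truncation argument: one fixes levels $\epsilon_k\downarrow0$ avoiding the (random) jump sizes of $Z$, obtains joint convergence of $(X^n,K^n,Z^n,[[Z^{n,\epsilon_k}]],[J^{n,\epsilon_k}])$ for each fixed $k$, chooses a slowly increasing diagonal $k_n\uparrow\infty$, and controls the contribution of the intermediate jumps uniformly via the quadratic bound $|\phi(\Delta Z^n_s,X^n_{s-})|\leq C|\Delta Z^n_s|^2$ from Remark \ref{rem3}(b), using that $\sum_{s\leq t}|\Delta\wz_s|^2\mathbf{1}_{\{|\Delta\wz_s|\leq\epsilon_k\}}\searrow0$. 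Only after (\ref{eq3.8}) can one apply \cite[Theorem 2.6]{JMP} to pass to the limit in the stochastic integrals and then \cite[Proposition 4]{Sl1} to transfer convergence of the drivers to convergence of the reflected pairs. Without the diagonal choice and the uniform small-jump estimate, ``continuity of $\vf$ on bounded sets'' does not suffice, since infinitely many jumps must be handled simultaneously. Part (iii) as you state it is fine and is exactly the paper's one-line deduction from (ii).
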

\begin{proof} We follow the proof of \cite[Theorem 4]{Sl1}
and \cite[Theorem 5.2]{Sl3}. \\
(i) Set
\[
h^n(s,x)=\frac{\vf(\Delta Z^n_sf,x)-x-f(x)\Delta Z^n_s}{|\Delta
Z^n_s|^2},\quad (s,x)\in\Rp\times\bar D,\,n\in\N
\]
and observe that (\ref{eq3.1})   can be rewritten into
the form
\begin{align*}
X^n_t&=X_0^n+\int_0^tf(X^n_{s-})\,dZ^n_s+\frac12\int_0^tf^{\prime}f
(X^n_{s-})\,d[Z^n]^c_s\\
&\quad+\int_0^th^n(s,X^n_{s-})d[Z^n]^d_s+K^n_t.
\end{align*}
By Remark \ref{rem3}(b), $|h^n|\leq C$, $n\in\N$.  Since $\{Z^n\}$
satisfies (UT),  the sequence
\[
\{Y^n= X_0^n+\int_0^\cdot f(X^n_{s-})\,dZ^n_s+\frac12\int_0^\cdot f^{\prime}f
(X^n_{s-})\,d[Z^n]^c_s+\int_0^\cdot h^n(s,X^n_{s-})d[Z^n]^d_s\}
\]
satisfies (UT)  as well (see, e.g., \cite[Lemma 1.6]{MS}).
Assertion (i) now follows from \cite[Proposition 3]{Sl1}.\\
(ii) By  \cite{Jac}, $ (Z^n,[[Z^n]])\arrowd(Z,[[Z]])$ in
$\Ds{d(d+1)})$, which implies that
\begin{equation}\label{eq3.3}
\{(Z^n,[[Z^n]]^c,[Z^n]^d,[[Z^n]])\}\quad\mbox{\normalshape is tight in}\,\,
\Ds{d(1+2d)+1}).
\end{equation}
Using  (\ref{eq3.3}), part (i) and standard tightness criterions
for sequences satisfying (UT) (see, e.g., \cite[Proposition
3.3]{MS} one can show that $\{(Y^n,Z^n,[[Z^n]])\}$ is tight in
$\Ds{d(2+d)})$. From this and \cite[Proposition 4]{Sl1} we
conclude that
\[
\{(X^n,K^n,Z^n,[[Z^n]])\}\quad\mbox{\normalshape is tight
in}\,\,\Ds{d(3+d)}).
\]
Without loss of generality we may assume that
\[
(X^n,Z^n,K^n,[[Z^n]])\arrowd(\wx,\wk,\wz,[[\wz]])\quad\mbox{\normalshape in}
\,\, \Ds{d(3+d)}).
\]
The proof of (ii) will be completed once we show that $(\wx,\wk)$
is a solution of the reflecting  SDE of the  form
\begin{align*}
\wx_t&=\wx_0+\int_0^tf(\wx_{s-})\,d\wz_s
+\frac12\int_0^tf^{\prime}f(\wx_{s-})\,d[\wz]^c_s\\
&\quad+\int_0^t h(s,\wx_{s-})\,d[\wz]^d_s+\wk_t,\quad
t\in\Rp.\nonumber
\end{align*}
while by \cite[Proposition 4]{Sl1}, to show the last statement it
sufficies to prove the convergence  $(X^n,K^n,Y^n)\arrowd(\wx,\wk,\wy)$ in
$\Ds{3d})$, where
\begin{equation}\label{eq3.4}\wy_t=\wx_0+\int_0^tf(\wx_{s-})\,d\wz_s
+\frac12\int_0^tf^{\prime}f(\wx_{s-})\,d[\wz]^c_s
+\int_0^t h(s,\wx_{s-})\,d[\wz]^d_s,\quad t\in\Rp.
\end{equation}
Let  $\{\epsilon_k\}$  be a sequence of  constants
 such that $\epsilon_k\downarrow0$ and
$P[|\Delta Z_t|=\epsilon_k;t\in\Rp]=0,\,k\in\N$. Set
$\,J^{n,\epsilon_k}_t=\sum_{0<s\leq t}\Delta Z^n_s \mbox{\bf
1}_{\{|\Delta Z^n_s|>\epsilon_k\,\}}$,
$\,\wj^{\epsilon_k}_t=\sum_{0<s\leq t}\Delta Z_s\mbox{\bf
1}_{\{|\Delta \wz_s|>\epsilon_k\,\}}$,
$Z^{n,\epsilon_k}=Z^n_t-J^{n,\epsilon_k}_t$ and
$\wz^{\epsilon_k}=\wz_t-\wj^{\epsilon_k}_t$, $k,n\in\N.$ Since
\begin{equation}
\label{eq3.5}
(X^n,Z^n,[[Z^{n,\epsilon_k}]],[J^{n,\epsilon_k}])\arrowd
(\wx,\wz,[[\wz^{\epsilon_k}]],[\wj^{\epsilon_k}])
\quad\mbox{\normalshape in}\,\,\Ds{d(2+d)+1})
\end{equation}
and $\sum_{s\leq t}|\Delta \wz_s|^2\mbox{\bf 1}_{\{|\Delta
Z^n_s|>\epsilon_k\}}\nearrow [\wz]^d_t$, $P$-a.s. for $t\in\Rp$,
we can find a sufficiently slowly increasing sequence
$k_n\uparrow+\infty$  such that
\begin{equation}
\label{eq3.6}
\quad(X^n,Z^n,[[Z^{n,\epsilon_{k_n}}]],[J^{n,\epsilon_{k_n}}])\arrowd
(\wx,\wz,[[\wz]]^c,[\wz]^d)\,\,\mbox{\normalshape in}\,\,\Ds{d(2+d)+1}).
\end{equation}
Set $\phi(y,x)=\varphi(fy,x)-x-f(x)y$, $x,y\in\bar D$ and observe
that by  (\ref{eq3.6}),
\begin{align}
&
(X^n,K^n,Z^n,[[Z^{n,\epsilon_{k_n}}]],[J^{n,\epsilon_{k_n}}],\sum_{s\leq
\cdot} \phi(\Delta Z^n_s,X^n_{s-})
\mbox{\bf 1}_{\{|\Delta Z^n_s|>\epsilon_k\}})\label{eq3.7}\\
&\quad\quad\arrowd (\wx,\wk,\wz,[[\wz]]^c,[\wz]^d,\sum_{s\leq
t}\phi(\Delta \wz_s,\wx_{s-}) \mbox{\bf 1}_{\{|\Delta
\wz_s|>\epsilon_k\}})\,\, \mbox{\normalshape
in}\,\,\Ds{d(4+d)+1}). \nonumber
\end{align}
By Remark \ref{rem3}(b),  $\sum_{s\leq t} |\phi(\Delta
Z^n_s,X^n_{s-})| \mbox{\bf 1}_{\{\epsilon_k\geq|\Delta
Z^n_s|>\epsilon_{k_n}\}} \leq C\sum_{s\leq t} |\Delta
Z^n_s|^2\mbox{\bf 1}_{\{\epsilon_k\geq|\Delta Z^n_s|\}}$. Since
\[
\sum_{s\leq\cdot}|\Delta Z^n_s|^2 \mbox{\bf
1}_{\{\epsilon_k\geq|\Delta Z^n_s|\}} \arrowd \sum_{s\leq
\cdot}|\Delta \wz_s|^2 \mbox{\bf 1}_{\{\epsilon_k\geq|\Delta
\wz_s|\}} \quad\mbox{\normalshape in}\,\,\Dj
\]
and $\sum_{s\leq t}|\Delta \wz_s|^2 \mbox{\bf
1}_{\{\epsilon_k\geq|\Delta \wz_s|\}}\searrow0$, $P$-a.s. for
$t\in\Rp$, it follows from (\ref{eq3.7})  that
\begin{align}
\label{eq3.8} & (X^n,K^n,Z^n,[[Z^{n,\epsilon_{k_n}}]],\sum_{s\leq
\cdot} \phi(\Delta Z^n_s,X^n_{s-})
\mbox{\bf 1}_{\{|\Delta Z^n_s|>\epsilon_{k_n}\}})\\
&\quad\quad\arrowd
(\wx,\wk,\wz,[[\wz]]^c,\sum_{s\leq\cdot}\phi(\Delta
\wz_s,\wx_{s-}))\quad \mbox{\normalshape in}\,\,\Ds{d(4+d)}).
\nonumber
\end{align}
By the above and \cite[Theorem 2.6]{JMP},
\begin{eqnarray*}
\lefteqn{(\,X^n,K^n,\int_0^{\cdot}
f(X^n_{s-})dZ^n_s,\int_0^{\cdot}f^{\prime}f(X^n_{s-})
d[Z^{n,\epsilon_{k_n}}]_s,\sum_{s\leq\cdot}\phi(\Delta Z^n_s,X^n_{s-})
\mbox{\bf 1}_{\{|\Delta Z^n_s|>\epsilon_{k_n}\}}\,)}\\
& &\arrowd
(\wx,\wk,\int_0^{\cdot}f(\wx_{s-})d\wz_s,\int_0^{\cdot}f^{\prime}f(\wx_{s-})
d[\wz]^c,
\sum_{s\leq \cdot}\phi(\Delta \wz_s,\wx_{s-}))\quad
\mbox{\rm in}\,\,\Ds{4d}).
\end{eqnarray*}
Finally, arguing as in the proof of \cite[Theorem 5.2]{Sl3}
we show that \[\frac12\int_0^{\cdot}f^{\prime}f(X^n_{s-})d[Z^n]^c_s +
  \sum_{s\leq\cdot}\phi(\Delta Z^n_s,X^n_{s-})
\mbox{\bf 1}_{\{|\Delta Z^n_s|\leq\epsilon_{k_n}\}}-\frac12\int_0^{\cdot}f^{\prime}f(X^n_{s-})
d[Z^{n,\epsilon_n}]_s\arrowp0.
\]
Consequently, $(X^n,Y^n,K^n)\arrowd(\wx,\wy,\wk)$ in $\Ds{3d})$,
where $\wy$ satisfies (\ref{eq3.4}).\\
(iii) Follows immediately from (ii).
\end{proof}

\begin{corollary}
\label{cor1} Assume (A), (B) and  (F), ($\Delta$). Let  $Z$  be
an  $({\cal F}_t)$  adapted semimartingale and $X_0$ be ${\cal
F}_0$ measurable random variable such that $X_0\in\bar D$. Then there exists a weak solution
of   (\ref{eq2.6}).
\end{corollary}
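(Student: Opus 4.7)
The plan is to obtain $(X,K)$ as a weak limit of strong solutions $(X^n,K^n)$ of SDEs driven by the same $Z$ but with Lipschitz approximations $f^n$ of $f$ in place of $f$, and then to mimic the proof of Theorem \ref{thm1}.

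First I would construct $\{f^n\}\subset\ccc$ with $f^n$ and $(f^n)'f^n$ both bounded and globally Lipschitz, satisfying $\sup_n\sup_{x\in\bar D}\|f^n(x)\|\leq L:=\sup_{x\in\bar D}\|f(x)\|$ and $f^n\to f$, $(f^n)'f^n\to f'f$ uniformly on compact subsets of $\bar D$ (extend $f$ continuously to $\Rd$ by a Tietze argument and mollify). Condition ($\Delta$) then holds uniformly in $n$. In the It\^o-type reformulation (\ref{eq2.7}) with $f^n$ in place of $f$, the integrand
\[
h^n(s,\omega,x)=\frac{\vf(f^n\Delta Z_s,x)-x-f^n(x)\Delta Z_s}{|\Delta Z_s|^2}
\]
is bounded uniformly in $n$ by Remark \ref{rem3}(b) and Lipschitz in $x$ uniformly in $(s,\omega)$ by (\ref{eq2.8}) (applicable since $|\Delta Z|\leq\ro/L$). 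The resulting reflected It\^o SDE is driven by $(Z,[Z]^c,[Z]^d)$ with bounded Lipschitz coefficients, so an adaptation of \cite[Theorem 5]{Sl1} combined with the pathwise uniqueness from Lemma \ref{lem2} yields a unique $({\cal F}_t)$-adapted strong solution $(X^n,K^n)$, which by Remark \ref{rem3}(b) also solves (\ref{eq2.6}) with coefficient $f^n$.

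To pass to the limit I would follow the proof of Theorem \ref{thm1} with the constant driving sequence $Z^n\equiv Z$ and the varying coefficient $f^n$. The sequence $\{Z\}$ trivially satisfies (UT), and the $n$-uniform bounds on $\|f^n\|_\infty$, $\|(f^n)'f^n\|_\infty$ and $\|h^n\|_\infty$ make the tightness arguments of Theorem \ref{thm1}(i)-(ii) go through verbatim. Extracting a subsequence yields $(X^n,K^n)\arrowd(\wh X,\wh K)$ on a probability space carrying a copy $\wh Z$ of $Z$, and identifying the limit of each term in (\ref{eq2.7}) as in the proof of Theorem \ref{thm1}(ii) shows that $(\wh X,\wh K)$ is a weak solution of (\ref{eq2.6}). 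The main obstacle is the jump sum $\sum_{s\leq t}\phi^n(\Delta Z_s,X^n_{s-})$ with $\phi^n(y,x)=\vf(f^n y,x)-x-f^n(x)y$, whose coefficient $\phi^n$ itself varies with $n$; this is handled by the big-jump/small-jump decomposition from the proof of Theorem \ref{thm1}, using the $n$-uniform estimate $|\phi^n(y,x)|\leq C|y|^2$ from Remark \ref{rem3}(b) for the small jumps and the continuous dependence of the ODE flow $\vf$ on its vector field (together with $f^n\to f$ locally uniformly on $\bar D$) for the finitely many big jumps.
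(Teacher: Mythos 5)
The decisive gap is the existence input for your approximating equations. For each $n$ you need a strong solution of the reflected Stratonovich SDE with the Lipschitz coefficient $f^n$, and you propose to obtain it from ``an adaptation of \cite[Theorem 5]{Sl1}''. But Remark \ref{rem3}(a) of the paper points out precisely that \cite[Theorem 5]{Sl1} does \emph{not} apply to the Stratonovich equation because of the extra jump term, and the paper never carries out a Picard-type existence argument for (\ref{eq2.7}): existence in the Lipschitz case is Theorem \ref{thm2}, which comes \emph{after} Corollary \ref{cor1} and is itself proved by a discretization plus Gy\"ongy--Krylov argument whose tightness step rests on Theorem \ref{thm1}. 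So the step you treat as a citation is a substantial missing piece (feasible via Remark \ref{rem2} and (\ref{eq2.8}), but it would have to be written out). Two further gaps: under (F) only $f$ and $f'f$ are bounded, not $f'$, so mollification does not automatically give $\sup_n\|(f^n)'f^n\|_\infty<\infty$ (the discrepancy $(f')^n f^n-(f'f)^n$ is controlled only by local bounds on $f'$, which may blow up), so the $n$-uniform bounds your tightness argument needs require an extra truncation; and Theorem \ref{thm1} is stated for a fixed coefficient, so the limit identification with varying $f^n$ --- in particular the jump sum $\sum_s\phi^n(\Delta Z_s,X^n_{s-})$ --- has to be redone, which your sketch acknowledges but does not discharge.

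The paper sidesteps all of this by approximating the \emph{driver} rather than the coefficient: it takes $Z^n=Z\rn$, the discretization of $Z$ along partitions $T_n$. For a piecewise constant driver the Stratonovich equation collapses to the explicit recursion $X^n_{\tkn}=\Pi_{\bar D}\big(\vf(f(Z_{\tkn}-Z_{\tk}),X^n_{\tk})\big)$, so the approximating strong solutions exist by construction, with no Lipschitz hypothesis whatsoever; $\{Z\rn\}$ satisfies (UT) by the Bichteler--Mokobodski theorem and $Z\rn\to Z$ a.s.\ in $\D$, so Theorem \ref{thm1}(ii) applies verbatim with the same $f$ throughout and every limit point is a weak solution. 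This is the idea your proposal misses: for Marcus--Stratonovich equations the natural Euler-type scheme is explicitly solvable (solve the ODE flow over each increment, then project), which makes weak existence for merely continuous bounded $f,f'f$ essentially free once Theorem \ref{thm1} is available.
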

\begin{proof}
Let  $\{T_n\}$  be a sequence of partitions of  $\Rp$ satisfying
(\ref{eq2.1}).  Set ${\cal F}\rn_t={\cal F}_{\rho^n_t}$. Then
$Z\rn $ is an ${\cal F}\rn$ adapted semimartingale. By the
Bichteler-Mokobodski theorem the sequence $\{Z\rn\}$ satisfies
(UT).  Assume that $\{(X^n,K^n)\}$ is a family of strong solutions
of (\ref{eq3.1}) driven by $\{Z\rn\}$, that is
\begin{align*}
X^n_t&=X_0+\int_0^tf(X^n_{s-})\,dZ\rn_s+\frac12
\int_0^tf^{\prime}f(X^n_{s-})\,d[Z\rn]^c_s\\
&\quad+\sum_{s\leq t}\{\vf(f\Delta
Z\rn_s,X^n_{s-})-X^n_{s-}-f(X^n_{s-})
\Delta Z\rn_s\}+K^n_t\\
&=X_0+\sum_{s\leq t}\{\vf(f\Delta
Z\rn_s,X^n_{s-})-X^n_{s-}\}+K^n_t, \quad\,t\in\Rp,\,n\in\N.
\end{align*}
Simple calculation show that $(X^n,K^n)$  are defined by the
following recurent formula: $X^n_0=X_0$, $K^n_0=0$,
\begin{equation}
\left\{\begin{array}{ll}
\Delta Y^n_{\tkn}&=\vf(f(Z_{\tkn}-Z_{\tk}))-X^n_{\tk}\,, \\
X^n_{\tkn}&=\Pi_{\bar D}\big(X^n_{\tk}+\Delta Y^n_{\tkn}\big)
=\Pi_{\bar D}\big(\vf(f(Z_{\tkn}-Z_{\tk})),\big), \label{eq3.9}\\
K^n_{\tkn}&=K^n_{\tk}+(X^n_{\tkn}-X^n_{\tk})-\Delta Y^n_{\tkn}
\end{array}
\right.
\end{equation}
and $X^n_t=X^n_{\tk}$, $K^n_t=K^n_{\tk}$,  $t\in[\tk,\tkn)$,
$k\in\N\cup\{0\}$.  Since  $Z\rn\lra Z$ a.s.  in  $\D$, the result
is an immediate  consequence of Theorem \ref{thm1}(i).
\end{proof}
\begin{theorem}
\label{thm2} Assume (A), (B) and (F), ($\Delta$). Let $Z$ be an
$({\cal F}_t)$  adapted semimartingale and $X_0$ be a ${\cal F}_0$
measurable and such that $X_0\in\bar D$. If $f,f^{\prime}f$ are
 locally Lipschitz continuous,  then there exists a unique strong
solution of the  SDE (\ref{eq2.6}).
\end{theorem}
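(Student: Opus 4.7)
The plan is to deduce strong existence by combining the weak existence from Corollary \ref{cor1} with the pathwise uniqueness of Lemma \ref{lem2}, via a Yamada-Watanabe / Gy\"ongy-Krylov type argument. Pathwise uniqueness already takes care of the uniqueness assertion, so the task reduces to producing a strong solution on the original probability space $\spa$.

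First I would revisit the approximating sequence used in the proof of Corollary \ref{cor1}: the pairs $(X^n, K^n)$ defined by the explicit recurrence (\ref{eq3.9}) are strong solutions of (\ref{eq3.1}) driven by $Z\rn$, and crucially they live on the \emph{original} probability space $\spa$, adapted to the sub-filtrations $({\cal F}^{\rho^n}_t) \subset ({\cal F}_t)$. Theorem \ref{thm1}(i) together with the tightness arguments developed in its proof show that $\{(X^n, K^n, Z)\}$ is tight in $\Ds{3d})$.

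I would then run a Gy\"ongy-Krylov diagonal argument. For any two subsequences $\{n_k\}, \{m_k\}$, the joint family $\{(X^{n_k}, K^{n_k}, X^{m_k}, K^{m_k}, Z)\}$ is tight in $\Ds{5d})$. Passing to a further subsequence and invoking Skorokhod's representation theorem, on an auxiliary probability space I may assume a.s.\ convergence to a limit $(\wx, \wk, \wx', \wk', \wz)$. Re-running the identification step of Theorem \ref{thm1}(ii) separately for each marginal sequence, but with respect to the common filtration generated by all five limit processes, shows that $(\wx, \wk)$ and $(\wx', \wk')$ both solve (\ref{eq2.6}) driven by the \emph{same} semimartingale $\wz$ with the same initial condition. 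Pathwise uniqueness (Lemma \ref{lem2}) then forces $(\wx, \wk) = (\wx', \wk')$ a.s., so the Gy\"ongy-Krylov criterion yields $(X^n, K^n) \arrowp (X, K)$ in $\Ddd$ on the original space. The limit admits an $({\cal F}_t)$-adapted c\`adl\`ag modification, and a final application of Theorem \ref{thm1}(ii) on the original space identifies $(X, K)$ as a strong solution of (\ref{eq2.6}).

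The main obstacle is the joint identification step: verifying that in the Skorokhod representation both $(\wx, \wk)$ and $(\wx', \wk')$ are truly driven by the \emph{same} $\wz$ in the Stratonovich sense. This requires redoing the convergence of the quadratic variations, of the compensated sums of jumps through the small-jump truncation $\epsilon_{k_n}$, and of the stochastic integrals from the proof of Theorem \ref{thm1}(ii), now for the five-tuple rather than the three-tuple, with respect to a filtration adapted simultaneously to both approximating sequences. Once this joint identification is in place, pathwise uniqueness supplies the desired coupling and the remainder of the argument is routine.
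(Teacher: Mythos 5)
Your overall strategy---explicit recursive approximations living on the original space, a Gy\"ongy--Krylov coupling of two subsequences, identification of both limits as solutions via the machinery of Theorem \ref{thm1}, and then pathwise uniqueness from Lemma \ref{lem2}---is exactly the architecture of the paper's proof. The difference, and it is a material one, is your choice of approximating sequence. You take the deterministic-partition discretizations $Z\rn$ from Corollary \ref{cor1}, for which one only has $Z\rn\to Z$ in the Skorokhod $J_1$ topology, not uniformly on compacts. This is precisely what creates the ``main obstacle'' you flag and leave unresolved: a jump of $Z$ at time $s$ is displaced to the first point of $T_{l}$ after $s$ in one component and to the first point of $T_{m}$ after $s$ in the other, so the pair $(Z^{\rho^{l}},Z^{\rho^{m}})$ need not converge in $\mathbb{D}(\Rp,\R^{2d})$ (no single time change aligns both displaced jumps with $s$). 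Consequently the joint tuple of the two approximating triples is not tight in the $J_1$ topology of vector-valued paths, and the identification step of Theorem \ref{thm1}(ii)---which relies on joint $J_1$ convergence of $(X^n,K^n,Z^n,[[Z^n]],\dots)$ and the limit theorems for stochastic integrals under (UT)---cannot be run for the five-tuple as you propose. Carrying the undiscretized $Z$ along in the tuple does not by itself repair this, since the drivers of $X^{l_k}$ and $X^{m_k}$ are still the misaligned discretizations. A repair is possible (work in the product topology, identify each triple's limit separately, and argue via finite-dimensional distributions at continuity points that the two limiting drivers coincide a.s.), but none of this is in your write-up.

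The paper avoids the issue entirely by a different choice of approximation: it discretizes $Z$ along the stopping times $\tau^n_0=0$, $\tau^n_k=\inf\{t>\tau^n_{k-1}:|\Delta Z_t|>\frac1n\}\wedge(\tau^n_{k-1}+\frac1n)$, so that every jump of size exceeding $\frac1n$ is captured at its exact time and $\sup_{t\le q}|Z^n_t-Z_t|\to0$ $P$-a.s.\ for every $q\in\Rp$. Locally uniform convergence makes the joint convergence of the six-tuple $(X^{l_k},K^{l_k},Z^{l_k},X^{m_k},K^{m_k},Z^{m_k})$ in $\mathbb{D}(\Rp,\R^{6d})$ unproblematic and forces the two limiting drivers to be the same $\wz$, after which the identification and the appeal to Lemma \ref{lem2} go through exactly as you describe. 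So the missing idea is not the Gy\"ongy--Krylov scheme but the construction of an approximating sequence converging \emph{uniformly} to $Z$; with your deterministic partitions the key coupling step does not close.
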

\begin{proof}
Let  $\{\{\tau^n_k\}\}$  be an array of   $({\cal F}_t)$ stopping
times such that $\tau^n_0=0$, $\tau^n_k=\inf\{t>\tau^n_{k-1};
|\Delta Z_t|>\frac1{n}\}\wedge (\tau^n_{k-1}+\frac1{n})$,
$n,k\in\N$.  Let us consider the sequence  $\{Z^n\}$ of
 $({\cal F}_t)$ adapted semimartingales of the form: $Z^n_0=0$ and
\[
Z^n_t=Z^n_{\tau_k^n},\quad t\in[\tau_k^n,\tau_{k+1}^n),\quad
n,k\in\N.
\]
It is easy to see that
\begin{equation}
\label{eq3.10} \sup_{t\leq q}|Z^n_t-Z_t|\lra0,\quad
P\mbox{-a.s.},\,q\in\Rp.
\end{equation}
Observe that $Z^n$ is an $({\cal F}_t)$ adapted semimartingale and
the solution $(X^n,K^n)$ of (\ref{eq3.1}) driven by $Z^n$ has the
following form: $X^n_0=X_0$, $K^n_0=0$,
\begin{equation}
\left\{\begin{array}{ll}
\Delta Y^n_{\tau_{k+1}^n}&= \vf(f(Z_{\tau_{k+1}^n}-Z_{\tau^n_k}))-X^n_{\tau^n_k}, \\
X^n_{\tau_{k+1}^n}&=\Pi_{\bar D}
\big(\vf(f(Z_{\tau_{k+1}^n}-Z_{\tau^n_k}))\big),\label{eq3.11}\\
K^n_{\tau_{k+1}^n}&=K^n_{\tau_k^n}+(X^n_{\tau_{k+1}^n}-X^n_{\tau^n_k})-\Delta
Y^n_{\tau_{k+1}^n}
\end{array}
\right.
\end{equation}
and $X^n_t=X^n_{\tau_k^n}$, $K^n_t=K^n_{\tau_k^n}$,
$t\in[\tau_k^n,\tau_{k+1}^n)$, $k\in\N\cup\{0\}$.
 We will show that  $\{(X^n,K^n)\}$
converges in probability. For this purpose it suffices to show
that from any subsequences $(l)\subset(n), (m)\subset(n) $ it is
possible to choose further subsequences
$(l_k)\subset(l),(m_k)\subset(m)$ such that
\[(
(X^{l_k},K^{l_k}),( X^{m_k},K^{m_k}))
\arrowd((\wx,\wk),(\wx',\wk'))\quad\mbox{\rm in}\,\, \Ds{4d})
\]
(see Gy\"ongy and Krylov \cite{GK}). Using (\ref{eq3.10}), the fact
that $\{Z^n\}$ satisfies (UT) and arguments from  the  proof of
Theorem \ref{thm1}(i) we show  that
\[    \{(
X^{l},K^l,Z^l, X^{m},K^m,Z^m)\}\quad \mbox{\rm is tight in\,\,} \Ds{6d}).
\]
Therefore we can choose  subsequences
$(l_k)\subset(l),(m_k)\subset(m)$ such that
\[(
X^{l_k},K^{l_k},
Z^{l_k},X^{m_k},K^{m_k},Z^{m_k})\arrowd(\wx,\wk,\wz,\wx',\wk',\wz),\quad
\mbox{\rm  in}\,\,\Ds{6d}),
\]
where  $\wz$  is a  semimartingale with respect to the natural
filtration ${\cal F}^{\wx,\wk,\wx',\wk',\wz}$ such that ${\cal
L}(\wx_0,\wx)={\cal L}(X_0,Z)$. By the  arguments from the proof
of Theorem \ref{thm1}(i) the processes $(\wx,\wk)$ and
$(\wx'\wk')$ are two solutions of (\ref{eq2.6}) with $\wx_0,\wz$
instead of $X_0,Z$. Since by Lemma \ref{lem2} the SDE
(\ref{eq2.6}) is pathwise unique, $(\wx,\wk)=(\wx'\wk')$.
Consequently, $\{(X^n,K^n)\}$  converges in probability in  $\Ddd$
to some pair of  processes  $(X,K)$. Clearly,    $(X,K)$ is a
strong solution of (\ref{eq2.6}), so using once again the pathwise
uniqueness property we conclude that $(X,K)$  is a unique strong
solution of (\ref{eq2.6}).
\end{proof}

\begin{theorem}
\label{thm3}Assume (A),(B) and (F), ($\Delta$). Let  $\{Z^n\}$  be
a sequence of  $({\cal F}^n_t)$  adapted semimartingales
satisfying  (UT)   and $\{X^n_0\}$ be a sequence of ${\cal F}^n_0$
measurable random variables  such that $X^n_0\in\bar D$, $n\in\N$.
If   $\{(X^n,K^n)\}$  is a  sequence of strong solutions of the
SDE (\ref{eq3.1}) and  $f,f^{\prime}f$ are  locally
Lipschitz continuous, then the following two implications
are true:
\begin{description}
\item[{\rm (i)}] if  $X^n_0\arrowp X_0$  and  $Z^n\arrowp Z$  in
$\D$
   then
\[(X^n,K^n,Z^n)\arrowp(X,K,Z)
\quad\mbox{\rm{ in}} \,\, \Ds{2d}),
\]
\item[{\rm (ii)}] if  $X^n_0\arrowp X_0$  and   $\sup_{t\leq
q}|Z^n_t-Z_t|\arrowp0,$  $q\in\Rp$,  then
\[
\sup_{t\leq q}(|X^n_t-X_t|+|K^n_t-K_t|)\arrowp0,\quad q\in\Rp,
\]
\end{description}
where $(X,K)$ is the unique strong solution of (\ref{eq2.6}).
\end{theorem}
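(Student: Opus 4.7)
My plan for the proof splits naturally along the two parts.

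For part (i), the approach is the Gyöngy–Krylov subsequence criterion, exactly as used in the proof of Theorem \ref{thm2}. Given any two subsequences $(l)\subset(n)$, $(m)\subset(n)$, I would first invoke Theorem \ref{thm1}(i) together with the tightness results quoted from \cite{MS,Sl1} to conclude that the joint family
\[
\{(X^l,K^l,Z^l,X^l_0,X^m,K^m,Z^m,X^m_0)\}
\]
is tight. Extract a common further subsequence along which the joint law converges. The hypothesis $Z^n\arrowp Z$ and $X^n_0\arrowp X_0$ forces the two $\hat Z$-components and the two $\hat X_0$-components in the limit to coincide almost surely, so by Theorem \ref{thm1}(ii) both $(\hat X,\hat K)$ and $(\hat X',\hat K')$ appearing in the limit are strong solutions of (\ref{eq2.6}) driven by the same $(\hat X_0,\hat Z)$. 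Pathwise uniqueness (Lemma \ref{lem2}) then gives $(\hat X,\hat K)=(\hat X',\hat K')$ a.s., and the Gyöngy–Krylov criterion yields $(X^n,K^n,Z^n)\arrowp(X,K,Z)$ in the $J_1$ topology.

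For part (ii), the hypothesis $\sup_{t\le q}|Z^n_t-Z_t|\arrowp0$ is strictly stronger than $Z^n\arrowp Z$ in $\mathbb D$, so part (i) is applicable and gives $J_1$-convergence of $(X^n,K^n)$ to $(X,K)$ in probability. The task is then to upgrade to uniform convergence. My plan is to localize with stopping times $\tau_a=\inf\{t:\max(|X|_t,|K|_t,[M]_t,\langle M\rangle_t,|V|_t)>a\}$ (using the canonical decomposition $Z=M+V$ guaranteed by the bounded jumps of $Z$, as in Lemma \ref{lem2}) and to exploit the fact that $(X^n,K^n)$ and $(X,K)$ are solutions of the Skorokhod problem for $Y^n$ and $Y$ respectively, where $Y^n,Y$ are the semimartingales written out in Remark \ref{rem3}(b). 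The Lipschitz-type continuity of the Skorokhod map in the uniform topology from Lemma \ref{lem1} and estimate (\ref{eq2.4})–(\ref{eq2.5}) of Remark \ref{rem2} would then reduce the problem to showing that $\sup_{t\le \tau_a\wedge q}|Y^n_t-Y_t|\arrowp0$.

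To establish this, decompose
\[
Y^n-Y=\int_0^\cdot (f(X^n_{s-})-f(X_{s-}))\,dZ^n_s+\int_0^\cdot f(X_{s-})\,d(Z^n-Z)_s+(\text{bracket and jump corrections}),
\]
handle the first summand through Lipschitz continuity of $f$, $f'f$ on bounded sets together with the $J_1$-convergence from (i) and the stochastic Gronwall lemma used in Lemma \ref{lem2}, and handle the second summand through the Kurtz–Protter/Jakubowski–Mémin–Pagès stability theorem for stochastic integrals against sequences satisfying (UT) converging uniformly in probability. The bracket and $\phi$-correction terms are managed as in the proof of Theorem \ref{thm1}, using that uniform convergence of $Z^n$ to $Z$ forces $[Z^{n,\epsilon_{k_n}}]\arrowp[Z]^c$ and $\sum_{s\le\cdot}\phi(\Delta Z^n_s,X^n_{s-})\mathbf 1_{\{|\Delta Z^n_s|>\epsilon_{k_n}\}}\arrowp\sum_{s\le\cdot}\phi(\Delta Z_s,X_{s-})$ \emph{uniformly} on compacts, not merely in $J_1$. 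The main obstacle is precisely this last upgrade for the jump-correction term: one has to match each individual jump of $Z^n$ to the corresponding jump of $Z$ uniformly in $t$, which is possible only because $\sup_{t\le q}|Z^n-Z|\to 0$ implies $\Delta Z^n_s\to\Delta Z_s$ uniformly in $s$; combined with the local Lipschitz bound (\ref{eq2.8}) on $h$, this gives the required uniform convergence. Gronwall then closes the estimate and yields (ii).
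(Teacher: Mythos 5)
Your part (i) is essentially the paper's argument: the paper simply invokes Theorem \ref{thm2} and the proof of \cite[Corollary 11(i)]{Sl1}, which is precisely the Gy\"ongy--Krylov scheme you describe (joint tightness of the doubled sequence via the (UT) machinery of Theorem \ref{thm1}, identification of both limit points as solutions driven by the same limit $\wh Z$, and pathwise uniqueness from Lemma \ref{lem2}). No objection there.

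Part (ii) is where you diverge, and your route has a genuine gap. You propose to ``reduce the problem to showing that $\sup_{t\le\tau_a\wedge q}|Y^n_t-Y_t|\arrowp 0$'' by appealing to ``Lipschitz-type continuity of the Skorokhod map in the uniform topology from Lemma \ref{lem1}''. Under conditions (A),(B) alone the Skorokhod map is \emph{not} continuous, let alone Lipschitz, in the sup norm; Lemma \ref{lem1} controls only total variations, and the estimates (\ref{eq2.4})--(\ref{eq2.5}) of Remark \ref{rem2} bound $E\sup_t|X_t-X'_t|^{2p}$ by the \emph{bracket} $[M-M']$ and the \emph{variation} $|V-V'|$ of the difference of the inputs, not by $\sup_t|Y_t-Y'_t|$. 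So the stated reduction is unjustified; to make your Gronwall program work you would have to run it entirely at the level of $[M^n-M]$, $\langle M^n-M\rangle$ and $|V^n-V|$ (as in the proof of Lemma \ref{lem2}), which is a substantial amount of work you have only gestured at. More importantly, you miss the short argument the paper actually uses: from (\ref{eq2.6}) one reads off $\Delta X_t=\vf(f\Delta Z_t,X_{t-})-X_{t-}+\Delta K_t$, so $\Delta X_t\neq 0$ forces $\Delta Z_t\neq 0$, i.e.\ every jump of $X$ occurs at a jump time of $Z$; since part (i) gives $(X^n,K^n,Z^n)\arrowp(X,K,Z)$ jointly in $\D$ and $Z^n\to Z$ uniformly in probability, the standard ``$J_1$ convergence plus matched jumps implies uniform convergence'' lemma (\cite[Corollary C]{Sl}) immediately upgrades the convergence of $X^n$ (and then of $K^n$) to uniform convergence on compacts. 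You in fact invoke exactly this jump-matching principle at the end of your sketch, but only for the jump-correction term of $Y^n$ rather than for $X^n$ itself, where it settles the whole of (ii) in two lines.
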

\begin{proof}
(i) We use Theorem \ref{thm2}  and follow the proof of
\cite[Corollary 11(i)]{Sl1}.\\
(ii) Let us note that
\[
\Delta X_t=\vf(\Delta Z_tf,X_{t-})-X_{t-}+\Delta K_t
\]
and if  $\Delta X_t\neq0$  then    $\Delta Z_t\neq0$. By  part (i)
and \cite[Corollary C]{Sl},
\[
\sup_{t\leq q}|X^n_t-X_t|\arrowp0,\quad q\in\Rp,
\]
and the proof is complete.
\end{proof}

\begin{corollary}
\label{cor2}Assume  (A),(B) and  (F), ($\Delta$). Let  $Z$  be an
$({\cal F}_t)$  adapted semimartingale and $X_0$ be a ${\cal F}_0$
measurable and such that $X_0\in\bar D$.  Let  $\{T_n\}$ be a
sequence of partitions of $\Rp$  satisfying (\ref{eq2.1})  and
$\{(X^n,K^n)\}$  denotes a sequence of solutions of (\ref{eq3.1})
corresponding to  $Z^n=Z\rn$,  $X^n_0=X_0$, $n\in\N$.  If
$f,f^{\prime}f$ are locally Lipschitz continuous, then
\[
\sup_{t\leq
q}(|X^n_t-X_t\rn|+|K^n_t-K_t\rn|)\arrowp0,\quad\,q\in\Rp,
\]
where $(X,K)$ is a unique strong solution of (\ref{eq2.6}).
\end{corollary}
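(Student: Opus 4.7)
The plan is to combine Theorem \ref{thm3}(i) with the explicit piecewise-constant structure of the scheme and a jump-alignment argument at partition points.

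First, I would verify the hypotheses of Theorem \ref{thm3} for the sequence $Z^n:=Z\rn$. Each $Z\rn$ is an $\ff$-adapted semimartingale with jumps only at the partition points $\{\tk\}$, and $Z\rn\to Z$ a.s.\ in $\D$ by (\ref{eq2.1}) and the argument preceding (\ref{eq2.2}). By the Bichteler--Mokobodski theorem, exactly as in the proof of Corollary \ref{cor1}, the family $\{Z\rn\}$ satisfies (UT); the initial conditions are constant in $n$. Theorem \ref{thm3}(i) then yields
\[(X^n,K^n,Z\rn)\arrowp (X,K,Z)\quad\text{in }\,\Diii.\]

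Next, I would exploit the explicit recursion (\ref{eq3.9}), which shows that $X^n$ and $K^n$ are constant on each interval $[\tk,\tkn)$, so $X^n=(X^n)\rn$ and $K^n=(K^n)\rn$. Consequently,
\[\sup_{t\leq q}\bigl(|X^n_t-X_t\rn|+|K^n_t-K_t\rn|\bigr)=\max_{k\,:\,\tk\leq q}\bigl(|X^n_{\tk}-X_{\tk}|+|K^n_{\tk}-K_{\tk}|\bigr),\]
and it suffices to show that this maximum tends to zero in probability.

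Finally, I would upgrade convergence in $\D$ to uniform convergence along the partition grid. Invoking Skorokhod's representation theorem I may assume the convergence from the first step is almost sure and choose time changes $\lambda_n\to\mathrm{id}$ uniformly on $[0,q]$ with $\sup_{t\leq q}|X^n_{\lambda_n(t)}-X_t|\to 0$ and similarly for $K^n$. The structural point is that the jumps of $X\rn$ are located at the first partition point past each jump time of $Z$ (since the jumps of $X$ occur only at jumps of $Z$), while the recursion (\ref{eq3.9}) places the corresponding update of $X^n$ at precisely the same partition point; hence for all large $n$ these jumps align. Fixing $\epsilon>0$ and splitting the jumps of $Z$ in $[0,q]$ into finitely many of size exceeding $\epsilon$ plus a small remainder, the remainder can be controlled via (UT), Remark \ref{rem3}(b) and the boundedness of $f,f'f$, yielding the required estimate at the grid. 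The analogous estimate for $K^n$ then follows from Lemma \ref{lem1}.

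The main obstacle will be this last step: making the jump-alignment argument precise for all sufficiently large $n$, and controlling the accumulated oscillation between successive partition points coming from the continuous part of $Z$ together with the small jumps.
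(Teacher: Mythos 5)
Your proposal is correct and follows essentially the same route as the paper: apply Theorem \ref{thm3}(i) to $Z^n=Z\rn$ and then exploit the fact that $X^n,K^n$ and $X\rn,K\rn$ are piecewise constant with jumps at the same partition points. The paper compresses your final jump-alignment step into the single observation that $(X^n,X\rn)\arrowp(X,X)$ and $(K^n,K\rn)\arrowp(K,K)$ jointly in $\Ds{2d})$, whence the differences converge in the Skorokhod topology to the continuous limit $0$ and therefore uniformly on compacts; your more explicit time-change argument is just a proof of that assertion.
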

\begin{proof}
By Theorem \ref{thm3}(i),
\[
(X^n,K^n,Z\rn)\arrowp(X,K,Z)\quad\mbox{\rm in}\,\,\Ds{3d}).
\]
Consequently,
\[
(X^n,X\rn)\arrowp(X,X)\,\,\mbox{\rm in}\,\,\Ds{2d})\quad\mbox{\rm and}
\quad (K^n,K\rn)\arrowp(K,K)\,\,\mbox{\rm in}\,\,\Ds{2d}),\]
which implies  the corollary.
\end{proof}

\nsubsection{Approximations of  Wong-Zakai type}

Let us consider a sequence $\{T_n\}$  of partitions of  $\Rp$
satisfying  condition  (\ref{eq2.1}). We will approximate in
probability the first coordinate  $X$ of the solution of
(\ref{eq2.6}) by solutions of nonreflected SDEs given by the
following recurrent scheme: $\wh X^n_0=X_0$,
\[
\wh X^n_t=\Pi_{\bar D}(\wh
X^n_{\tk-})+(\tkn-\tk)^{-1}\int_{\tk}^tf(\wh
X_s^n)\,ds(Z_{\tkn}-Z_{\tk}), \quad
t\in[\tk,\tkn),\,k\in\N\cup\{0\}.
\]
One can observe that if  $\bar Z^n$ is the  linear approximation
of $Z$ of the form $\bar Z^n_0=Z_0=0$,
\[
\bar Z^n _t=Z_{\tk}+\frac{t-\tk}{\tkn-\tk}(Z_{\tkn}-Z_{\tk}),
\quad t\in[\tk,\tkn),\,n\in\N,k\in\N\cup\{0\},
\]
then $\wh X^n$ satisfies the equation
\[
\wh X^n_t=X_0+\int_0^tf(\wh X^n_s)\,d\bar Z^n_s+\wh K^n_t,\quad n\in\N,
\]
where $\wh K^n_t=\sum_{k;t^n_k\leq t} (\Pi_{\bar D}(\wh
X^n_{\tk-})-\wh X^n_{\tk-})$, $n\in\N$. Moreover, if $(X^n,K^n)$
is the strong solution of (\ref{eq3.1})  corresponding to  $Z\rn$
then $\wh X^n_t=X^n_t$ for  $t\in T_n$  and $\wh K^n_t=K^n_t$,
$t\in\Rp$.
\begin{theorem}
\label{thm4}Assume  (A), (B) and  (F), ($\Delta$). If
$f,f^{\prime}f$ are  locally Lipschitz continuous, then
\begin{description}
\item[{\rm (i)}]
$\displaystyle{
\sup_{t\leq q,\,t\in T_n}|\wh X^n_t-X_t|\arrowp0,\quad q\in\Rp}$  and
\[
\wh X^n_t\arrowp X_t
\]
provided  that $\Delta Z_t=0$  or $t\in\liminf_{n\rightarrow+\infty}T_n$,
\item[{\rm (ii)}]$\displaystyle{\wh X^n\arrowp X}$ in the $S$-topology,
\item[{\rm (iii)}]if  moreover $Z$  is a semimartingale  with continuous
trajectories then
\[
\sup_{t\leq q}|\wh X^n_t-X_t|\arrowp0,\quad q\in\Rp,
\]
\end{description}
where  $X$  is the first coordinate of the unique strong solution
of the  SDE  (\ref{eq2.6}).
\end{theorem}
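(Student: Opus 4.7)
The plan is to reduce everything to the earlier stability Corollary~\ref{cor2}. From the identification recorded just before the theorem, $\wh X^n_t = X^n_t$ for $t\in T_n$ and $\wh K^n = K^n$ on $\Rp$, where $(X^n,K^n)$ is the strong solution of (\ref{eq3.1}) driven by the discretization $Z\rn$. Corollary~\ref{cor2} delivers $\sup_{t\le q}(|X^n_t - X_t\rn| + |K^n_t - K_t\rn|)\arrowp 0$. Since $X_t\rn = X_t$ whenever $t\in T_n$, restricting the supremum to grid points gives $\sup_{t\le q,\,t\in T_n}|\wh X^n_t - X_t| \arrowp 0$, which is the first assertion of (i).

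For the pointwise claim in (i), the case $t\in\liminf_n T_n$ reduces directly to the previous bound. When $\Delta Z_t=0$, write $t\in[\tk,\tkn)$ for the relevant index depending on $n$; from the recursive formula for $\wh X^n$ and the boundedness of $f$ one obtains $|\wh X^n_t - \wh X^n_{\tk}|\le L\,|Z_{\tkn}-Z_{\tk}|$. Since $\Delta Z_t=0$ and the mesh of $T_n$ shrinks to zero, $|Z_{\tkn}-Z_{\tk}|\arrowp 0$. On the limit side, the Skorokhod problem structure forces $\Delta X_t=\Delta K_t=0$ whenever the Stratonovich driver $Y$ in (\ref{eq2.6}) is continuous at $t$, which is the case here, so $X_{\tk}\to X_t$. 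Combined with $\wh X^n_{\tk}=X^n_{\tk}\to X_{\tk}$ (from the first assertion applied along points of $T_n$ approaching $t$), the triangle inequality yields $\wh X^n_t\arrowp X_t$.

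For (ii), I would invoke Jakubowski's characterization of $S$-convergence in probability: uniform tightness of the conditional-variation functionals $N^a(\wh X^n)$ on each $[0,q]$, together with convergence in probability of $\wh X^n_t$ at a dense subset of $\Rp$. The second is immediate from (i) applied at every point of $\bigcup_n T_n$ and of $\{t:\Delta Z_t=0\}$. For the first, use the representation $\wh X^n=X_0+\int_0^\cdot f(\wh X^n_s)\,d\bar Z^n_s+\wh K^n$, the variation bound of Lemma~\ref{lem1} to dominate $|\wh K^n|$ by the variation of the stochastic integral, and the (UT) property for the interpolated drivers $\{\bar Z^n\}$ inherited from $\{Z\rn\}$ via the Bichteler--Mokobodski theorem. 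For (iii), continuity of $Z$ makes $Y$ continuous and hence $X,K$ continuous. Uniform continuity of $Z$ on $[0,q]$ gives $\max_{k:\tkn\le q}|Z_{\tkn}-Z_{\tk}|\arrowp 0$, which via the same interval estimate forces $\sup_{t\le q}|\wh X^n_t - \wh X^n_{\rho^n_t}|\arrowp 0$; together with the first assertion of (i) at grid points and the uniform continuity of the continuous limit $X$, this gives $\sup_{t\le q}|\wh X^n_t-X_t|\arrowp 0$.

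I expect (ii) to be the main obstacle: $S$-topology convergence requires a tightness control that is not supplied by mere sup-norm closeness to a discretized solution, and the conditional-variation estimate for $\wh X^n$ has to be set up carefully, with the reflection term $\wh K^n$ handled via Lemma~\ref{lem1} and the integral term via (UT) for $\{\bar Z^n\}$. Once this uniform variation bound is in place, the rest of the argument is routine interpolation combined with the pointwise convergence from (i).
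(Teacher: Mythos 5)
Parts (i) and (iii) of your proposal coincide with the paper's own argument: the reduction to Corollary \ref{cor2} via the identity $\wh X\rnn=X^n$, the interval estimate $|\wh X^n_t-\wh X^n_{\rho^n_t}|\le L|Z_{\tkn}-Z_{\tk}|$, and the observation that $\Delta Z_t=0$ forces $\Delta X_t=\Delta K_t=0$ are exactly the steps used there.

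The gap is in (ii), and it sits precisely where you predicted trouble, in the two technical inputs you propose. First, Lemma \ref{lem1} cannot control $\{|\wh K^n|_q\}$: it dominates $|\wh K^n|_q$ by the total variation of the driver $\int_0^\cdot f(\wh X^n_s)\,d\bar Z^n_s$, which is of order $L\sum_{k}|Z_{\tkn}-Z_{\tk}|$ and diverges as $n\to\infty$ already for Brownian $Z$. The paper instead obtains boundedness in probability of $\{|\wh K^n|_q\}=\{|K^n|_q\}$ from Theorem \ref{thm1}(i), i.e.\ from the variation estimate of \cite[Proposition 3]{Sl1} for Skorokhod problems whose drivers satisfy (UT) --- a statement about (UT), not about pathwise total variation. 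Second, the interpolations $\{\bar Z^n\}$ do \emph{not} inherit (UT): taking on $(\tk,\tkn]$ the elementary integrand equal to the sign of the increment $Z_{\tkn}-Z_{\tk}$ (admissible for the look-ahead filtration $\wh{\cal F}^n_t={\cal F}_{\rho^{n,*}_t}$, which is the only filtration making $\bar Z^n$ adapted) produces integrals of order $\sum_k|Z_{\tkn}-Z_{\tk}|$, which are not tight. The failure of (UT) for the interpolations is the very reason a Wong--Zakai correction appears at all; if $\{\bar Z^n\}$ satisfied (UT), the limit would solve the It\^o, not the Stratonovich, equation. The paper circumvents both problems by proving (UT) for the integral process $\wh Y^n$ itself, splitting $\wh Y^n=I^{n,1}+I^{n,2}$ with $I^{n,1}=\int_0^\cdot f(\wh X\rnn_{s-})\,d\bar Z^n_s$ a discrete integral handled by the Bichteler--Mokobodski theorem, and $I^{n,2}=\int_0^\cdot (f(\wh X^n_s)-f(\wh X\rnn_{s-}))\,d\bar Z^n_s$ of total variation at most $C\sum_k|Z_{\tk}-Z_{\tkk}|^2\le C[Z\rn]_{q+1}$ thanks to the boundedness of $f'f$; the squared increments are what make this term tight. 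Without such a decomposition your tightness argument does not close, and the rest of (ii) (Jakubowski's criterion plus the pointwise convergence from (i) on a dense set) cannot be invoked.
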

\begin{proof}
(i) Assume that  $\{(X^n,K^n)\}$ is a family of  strong solutions
of (\ref{eq3.1})  corresponding to  $\{Z\rn\}$. Since $\wh
X\rnn=X^n$, it follows from Corollary (\ref{cor2}) that
\[
\sup_{t\leq q,\,t\in T_n}|\wh X^n_t-X_t| =\sup_{t\leq q}|\wh
X\rnn_t-X\rn_t|\arrowp0,\quad q\in\Rp.
\]
Moreover, if $\Delta Z_t=0$ then $X\rn_t\to X_t$.  Therefore  for
$q\geq t$  and $\tk=\rho_n(t)\leq t$ we have
\[
|\wh X^n_t-X_t|\leq \sup_{t\leq q,\,t\in T_n}|\wh X^n_t-X_t|
+L|Z_{\tkn}-Z_{\tk}|+|X\rn_t-X_t|\lra0,\quad P\mbox{-a.s.},\]
where   $L=\sup_{x\in\bar D}\|f(x)\|$.\\
(ii) We will show that $\{\wh X^n\}$  is tight in the $S$
topology. By Theorem \ref{thm1}(i) the sequence  $\{|\wh
K^n|_q=|K^n|_q\}$ is bounded in probability. Therefore it suffices
to show that
\[
\{\wh Y^n=\int_0^{\cdot}f(\wh X^n_s)\,d\bar Z^n_s\} \quad
\mbox{\rm satisfies (UT)}
\]
and then use \cite[Theorem 4.1]{Jak}. Set
$\rho^{n,*}_t=\min\{t^n_k;\,t^n_k>t\}$ and $\wh{\cal F}^n_t={\cal
F}_{\rho^{n,*}_t}={\cal F}_{\tk}$, $t\in[\tkk,\tk)$,
$k\in\N\cup\{0\}$, $n\in\N$. Clearly, $\wh Y^n$ is an $(\wh{\cal
F}^n_t)$ adapted processes admitting the decomposition into the
sum of two $(\wh{\cal F}^n_t)$ adapted processes of the form
\[
\int_0^{\cdot}f(\wh X\rnn_{s-})\,d\bar Z^n_s
+\int_0^{\cdot}(f(\wh X^n_s)-f(\wh X\rnn_{s-}))\,d\bar Z^n_s=I^{n,1}+I^{n,2}.
\]
Since
\[I^{n,1}_t=\sum_{k;\tk\leq t}f(\wh X^n_{\tkk})
(Z_{\tk}-Z_{\tkk})+(t-\rho^n_t)f(\wh X^n_{\rho^n_t})
(Z_{\rho^{n,*}_t}-Z_{\rho^n_t}),\quad t\in\Rp,
\]
it  follows from the Bichteler-Mokobodski theorem that
$\{I^{n,1}\}$ satisfies (UT). On the other hand, in view of
boundedness of $f'f$ there is $C>0$ such that for all sufficiently
large $n$,
\begin{align*}
|I^{n,2}|_q\leq&\int_{0}^{\rho^{n,*}_q}\|f(\wh X^n_s)-f(\wh
X^{n,\rho^n}_{s-}))|\|\,d|\bar Z^n|_s\\&=\sum_{k;\,\tk\leq
q}\int_{\tkk}^{\tk}\|f(\wh X^n_s)-f(\wh X^{n}_{\tkk})\|\,d|\bar
Z^n|_s\\
&\leq\sum_{k;\,\tk\leq \rho^{n,*}_q}
\int_{\tkk}^{\tk}\int^s_{\tkk}\|f'f(\wh X^n_u)\|\,d|\bar
Z^n|_u\,d|\bar Z^n|_s\\&\leq C \sum_{k;\,\tk\leq
\rho^{n,*}_q}|Z_{\tk}-Z_{\tkk}|^2\leq C \sum_{k;\,\tk\leq
q+1}|Z_{\tk}-Z_{\tkk}|^2.
\end{align*}
This implies that $\{|I^{n,2}|_q\}$  is bounded in probability and
completes the proof of (ii). \\
(iii) In this case the solution  $X$  has  continuous trajectories
as well. Therefore $ \sup_{t\leq q}|X_t-X^{\rho_n}_t|\lra0$,
$P$-a.s, $q\in\Rp$. Similarly, since the trajectories of $Z $ are
continuous, we have  $\sup_{t\leq q}|\Delta Z\rn_t|\lra0$,
$P$-a.s., $q\in\Rp$, which  implies that for all sufficiently
large $n$,
\begin{equation}
\label{eq4.1}
\sup_{t\leq q}|\wh X^n_t-\wh X^{n,\rho_n}_t |\leq L\sup_{t\leq
q+1}|\Delta Z\rn_t|\lra0,\quad P\mbox{-a.s.}
\end{equation}
Now,  (iii)  is an easy consequence of (i).
\end{proof}
\medskip

We now consider standard Wong-Zakai type  approximation of
reflected Stratonovich SDEs. The approximation processes $(\bar
X^n,\bar K^n)$ are defined by the recurrent scheme: $\bar
X^n_0=X_0$ and
\begin{equation}\label{eq4.20}
\bar X^n_t=\bar X^n_{\tk}+ (\tkn-\tk)^{-1}\int_{\tk}^tf(\bar
X^{n}_{s})\,ds(Z_{\tkn}-Z_{\tk})+\bar K^n_t-\bar K^n_{\tkn}
\end{equation}
for $t\in[\tk,\tkn)$, $k\in\N\cup\{0\}$. Clearly, $(\bar X^n,\bar
K^n)$ is a solution of the Skorokhod problem associated with
\[
\bar Y^n=X_0+\int_0^\cdot f(\bar X^n_s)\,d\bar Z^n_s,\quad
n\in\N.
\]

\begin{theorem}
\label{thm5}Assume (A),(B) and (F). If $f,f^{\prime}f$ are locally
Lipschitz continuous and $Z$ is a semimartingale with continuous
trajectories then
\[
\sup_{t\leq q}(|\bar X^n_t-X_t|+|\bar K^n_t-K_t|)\arrowp0,\quad q\in\Rp.
\]
\end{theorem}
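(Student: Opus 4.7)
The plan is to follow the pattern of Theorems \ref{thm1}--\ref{thm3}: establish tightness of $\{(\bar X^n, \bar K^n)\}$, identify any subsequential limit as a solution of (\ref{eq2.6}) via a Wong--Zakai style expansion, invoke pathwise uniqueness from Lemma \ref{lem2}, and then exploit continuity of the limit to upgrade to uniform convergence on compacts in probability. Continuity of $Z$ gives $|\Delta Z|=0$, so $(\Delta)$ is automatic and $[Z]^d = 0$, eliminating the jump correction term in (\ref{eq2.6}).

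\textbf{Tightness.} Each $\bar X^n$ has continuous trajectories because $\bar Z^n$ does. Applying Lemma \ref{lem1} to the Skorokhod problem on a single subinterval $[\tk, \tkn]$ driven by $\bar Y^n$ gives $|\bar X^n_t - \bar X^n_s| \leq 2L|Z_{\tkn}-Z_{\tk}|$ and $|\bar K^n|_{[s,t]} \leq L|Z_{\tkn}-Z_{\tk}|$ for $s,t \in [\tk,\tkn]$, where $L = \sup_{x \in \bar D}\|f(x)\|$. Gluing these estimates across subintervals produces a modulus-of-continuity bound for $(\bar X^n, \bar K^n)$ controlled by that of $Z$, whence $\{(\bar X^n, \bar K^n, Z)\}$ is tight in the space of continuous functions.

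\textbf{Identification.} Along any subsequence extract a further subsequence so that $(\bar X^n, \bar K^n, Z) \to (\tilde X, \tilde K, Z)$ in law, and by Skorokhod representation assume a.s.\ uniform convergence on compacts. On each $[\tk, \tkn]$ the reflected ODE for $\bar X^n$ has constant driver $(Z_{\tkn}-Z_{\tk})/(\tkn-\tk)$, and a Taylor expansion of $f$ around $\bar X^n_{\tk}$ yields
\begin{equation*}
\int_{\tk}^{\tkn} f(\bar X^n_s)\,d\bar Z^n_s = f(\bar X^n_{\tk})(Z_{\tkn}-Z_{\tk}) + \tfrac12 (f^{\prime}f)(\bar X^n_{\tk})(Z_{\tkn}-Z_{\tk})^{\otimes 2} + r^n_k,
\end{equation*}
where the remainder $r^n_k$ is of higher order and also absorbs the reflection contribution during $[\tk,\tkn]$. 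Summing over $k$: the first term is a left-Riemann approximation of $\int_0^t f(\tilde X_s)\,dZ_s$, which converges by the arguments of \cite{JMP}; the second converges to $\tfrac12 \int_0^t f^{\prime}f(\tilde X_s)\,d[Z]_s = \tfrac12 \int_0^t f^{\prime}f(\tilde X_s)\,d[Z]^c_s$ via $\sum_k (Z_{\tkn}-Z_{\tk})^{\otimes 2} \to [Z]$; and $\sum_k r^n_k \arrowp 0$ by boundedness of $f^{\prime}f$ together with the per-subinterval estimate from the tightness step. Passing to the limit, $(\tilde X, \tilde K)$ solves the Skorokhod problem for $X_0 + \int_0^\cdot f(\tilde X_s)\,dZ_s + \tfrac12 \int_0^\cdot f^{\prime}f(\tilde X_s)\,d[Z]^c_s$, which is a strong solution of (\ref{eq2.6}).

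\textbf{Uniqueness and conclusion.} Lemma \ref{lem2} gives pathwise uniqueness, so $(\tilde X, \tilde K) = (X, K)$. The Gy\"ongy--Krylov argument from the proof of Theorem \ref{thm2} then upgrades subsequential convergence to convergence in probability of the whole sequence in $C(\Rp, \R^{2d})$, which, since the limit is continuous, is precisely $\sup_{t \leq q}(|\bar X^n_t - X_t| + |\bar K^n_t - K_t|) \arrowp 0$ for every $q \in \Rp$. The hard part will be the careful treatment of the reflection contribution to $r^n_k$: within each subinterval $\bar X^n$ may hit $\partial D$ and be reflected, so the Taylor expansion must be performed along the reflected ODE flow rather than the unconstrained flow $\vf(\cdot,\cdot)$, and one has to show that the extra boundary terms produced are of the same negligible order as the rest of $r^n_k$ --- this is where condition (B) enters, through the variation estimate of Lemma \ref{lem1}.
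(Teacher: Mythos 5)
Your overall plan (tightness, identification of limit points, pathwise uniqueness, Gy\"ongy--Krylov) is a legitimate alternative skeleton, but as written it has a gap at the very first step which the rest of the argument inherits. Gluing the per-subinterval bounds from Lemma \ref{lem1} gives, for $s<t$, only $|\bar X^n_t-\bar X^n_s|\le 2L\sum_{k}|Z_{\tkn}-Z_{\tk}|$, the sum running over the subintervals meeting $[s,t]$. This is a discrete total variation of $Z$, not a modulus of continuity: for $Z$ a Brownian motion and $t-s=\delta$ fixed it is of order $\sqrt{n}\,\delta$ and diverges. So the claimed modulus-of-continuity bound ``controlled by that of $Z$'' is false, and tightness of $\{(\bar X^n,\bar K^n)\}$ does not follow this way. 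The paper instead proves C-tightness of the free term $\bar Y^n=X_0+\int_0^\cdot f(\bar X^n_s)\,d\bar Z^n_s$ by splitting it into a discrete stochastic integral $\int_0^{\rho^n_\cdot}f(\bar X^{n,\rho^n}_{s-})\,dZ^{\rho^n}_s$, which is C-tight because $\{Z^{\rho^n}\}$ satisfies (UT), plus a correction whose variation over $[s,t]$ is dominated by $C([Z^{\rho^n}]_t-[Z^{\rho^n}]_s)$, and then transfers C-tightness and the bound on $\{|\bar K^n|_q\}$ to $(\bar X^n,\bar K^n)$ via \cite[Propositions 3,4]{Sl1}. Some argument of this kind, using semimartingale tools rather than pathwise variation, is unavoidable.

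The identification step has the same difficulty in a sharper form, and you flag it yourself without resolving it. Expanding $f(\bar X^n_s)$ around $\bar X^n_{\tk}$ and substituting $\bar X^n_s-\bar X^n_{\tk}=\int_{\tk}^sf(\bar X^n_u)\,d\bar Z^n_u+(\bar K^n_s-\bar K^n_{\tk})$ produces a reflection contribution whose $k$-th summand is, by Lemma \ref{lem1}, only bounded by $C|Z_{\tkn}-Z_{\tk}|^2$; summed over $k$ this is of order $[Z^{\rho^n}]_q$, i.e.\ of the same order as the Stratonovich correction itself, not $o(1)$. To kill it one must instead use $\sum_k|\bar K^n|_{[\tk,\tkn]}\,|Z_{\tkn}-Z_{\tk}|\le\max_k|Z_{\tkn}-Z_{\tk}|\cdot|\bar K^n|_q$, which again requires the boundedness in probability of $\{|\bar K^n|_q\}$ from the (missing) tightness step. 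Note finally that the paper avoids identifying the limit of $\bar X^n$ from scratch altogether: it reduces Theorem \ref{thm5} to showing $\sup_{t\le q}|\wh X^n_t-\bar X^n_t|\arrowp 0$, where $\wh X^n$ is the scheme of Theorem \ref{thm4} whose convergence is already established, and proves this by Saisho's inequality, integration by parts, the Metivier--Pellaumail inequality and a stochastic Gronwall lemma. Your route could probably be completed, but only after the tightness and remainder estimates are reworked along these lines.
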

\begin{proof}
We will use the notation from the proof of Theorem \ref{thm4}. For
brevity,  we write $\Delta Z_k=Z_{\tk}-Z_{\tkk}$, $\Delta
t_k=(\tk-\tkk)$, $k\in\N$. From Theorem \ref{thm4},  we know that
$\sup_{t\leq q}|\wh X^n_t-X_t|\arrowp0$, $q\in\Rp$. Moreover,
$(\wh X\rnn,\wh K^n)=(\wh X\rnn,\wh K\rnn)=(X^n,K^n)$ is a solution of the Skorokhod problem
associated with $Y^n=X_0 +\int_0^{\rho^n_{\cdot}}f(\wh X^n_s)\,d
\bar Z^n_s$ and
\begin{equation}\label{eq4.2}
|\Delta K^n_t|\leq|\Delta Y^n_t|\leq \sup_x\|f(x)\|\,|\Delta
Z\rn_t|, \quad t\in\Rp.
\end{equation}
Since $\{Y^n=X^n-K^n\}$ converges uniformly in probability, it
follows from \cite[Proposition 3]{Sl1} that
\begin{equation}\label{eq4.3}
\{|K^n|_q\}\quad\mbox{\rm is bounded in probability}\,\,q\in\Rp.
\end{equation}
In the rest of the proof we will show that
\begin{equation}\label{eq4.4}
\sup_{t\leq q}|\wh X^n_t-\bar X^n_t|\arrowp0,\quad q\in\Rp.
\end{equation}
We start with proving that (\ref{eq4.3}) holds with $K^n$ replaced
by  $\bar K^n$. To this end, we  decompose $Y^n$ into the sum
\begin{align*}\bar Y^n_t&=(\bar Y^n_t-\bar
Y^{n,\rho^n}_t)+\int_0^{\rho^n_t}f(\bar X\rnn_{s-})\,d
Z\rn_s+\int_0^{\rho^n_t}(f(\bar X^n_s)-f(\bar
X^{n,\rho^n}_{s-}))\,d\bar Z^n_s\\
&=I^{n,1}_t+I^{n,2}_t+I^{n,3}_t.
\end{align*}
Since $\{Z\rn\}$ satisfies (UT) and $\sup_{t\leq q}|Z\rn_t-Z_t|\to
0$, $P$-a.s., $q\in\Rp$,  and since moreover  $f$ is bounded, we
see that for all sufficiently large $n$,
\begin{equation}
\label{eq4.5} \sup_{t\leq q}|I^{n,1}_t|=\sup_{t\leq q}|\bar
Y^n_t-\bar Y\rnn_t|\leq L\sup_{t\leq q+1} |\Delta Z\rn_t|\lra
0,\quad P\mbox{-a.s.},\,\,q\in\Rp
\end{equation}
and
\begin{equation}
\label{eq4.6}
\{I^{n,2}=\int_0^{\rho^n_{\cdot}}f(\bar
X\rnn_{s-})\,d Z\rn_s\}\quad\mbox{\rm is  C-tight}.
\end{equation}
Furthermore,
\begin{align*}
I^{n,3}_t=\int_{0}^{\rho^n_t}(f(\bar X^n_s)-f(\bar
X^{n,\rho^n}_{s-}))\,d\bar Z^n_s&=\sum_{k;\tk\leq
t}\int_{\tkk}^{\tk}(f(\bar X^n_s)-f(\bar X^{n}_{\tkk}))\,d\bar
Z^n_s\\
&=\sum_{k;\tk\leq t}\int_{\tkk}^{\tk}\int^s_{\tkk}f'(\bar
X^n_u)\,d\bar X^n_u\,d\bar Z^n_s.
\end{align*}
Since $f'f$ is bounded, it follows from Lemma \ref{lem1} that
there is $C>0$ such that
\begin{align*}
|\int^s_{\tkk}f'(\bar X^n_u)\,d\bar
X^n_u|&\leq\int^s_{\tkk}\|f'f(\bar X^n_u)\|\,du |\frac{|\Delta
Z_k|}{\Delta t_k}+|\int^s_{\tkk}f'(\bar
X^n_u)\,d\bar K^n_u|\\
&\leq 2\int^s_{\tkk}\|f'f(\bar X^n_u)\|\,du |\frac{|\Delta
Z_k|}{\Delta t_k}\leq C|\Delta Z_k|.
\end{align*}
It follows that for any $s<t$,
\[|I^{n,3}_t-I^{n,3}_s|\leq C \sum_{k;\,s<\tk\leq
t}|Z_{\tk}-Z_{\tkk}|^2=C([Z\rn]_t-[Z\rn]_s).\] From the above and
the fact that $\sup_{t\leq q}|[Z\rn]_t-[Z]_t|\arrowp0$, $q\in\Rp$,
we deduce that
\begin{equation}\label{eq4.7}
\{I^{n,3}=\int_{0}^{\rho^n_{\cdot}}(f(\bar X^n_s)-f(\bar
X^{n,\rho^n}_{s-}))\,d\bar Z^n_s\}\quad\mbox{\rm is C-tight}.
\end{equation}
Combining (\ref{eq4.7})  with (\ref{eq4.6})  and (\ref{eq4.5}) we
see that  $\{\bar Y^n=I^{n,1}+I^{n,2}+I^{n,3}\}$ is C-tight.
Therefore, by  \cite[Proposition 3,4]{Sl1},
\begin{equation}\label{eq4.8}
\{|\bar K^n|_q\}\quad\mbox{\rm is bounded in
probability}
\end{equation}
and
\begin{equation}\label{eq4.9}
\{(\bar X^n,\bar K^n)\}\quad\mbox{\rm
is  C-tight. }
\end{equation}
By (\ref{eq4.1})  and (\ref{eq4.9}),  to prove (\ref{eq4.4}) it
suffices now to show that
\begin{equation}\label{eq4.10}
\sup_{t\leq q}|\wh X\rnn_t-\bar X\rnn_t|\arrowp0,\quad q\in\Rp.
\end{equation}
To check this we will use the stochastic  Gronwall inequality.
Since $Z$ is a continuous semimartingale, it admits decomposition
$Z=M+V$, where $M$ is a continuous locally square integrable
martingale  such that $M_0=0$ and $V$ is a continuous predictable
process with bounded variation such that $V_0=0$. Set
\[
\sigma^n_a=\inf\{t;\max(|M_t,|V|_t|,|\bar X^n_t|,|\bar K^n|_t)>a\},
\quad a\in\Rp,\,n\in\N.
\]
Obviously, $\lim_{a\rightarrow\infty}\limsup_{n\rightarrow\infty}
{P}(\sigma^n_a\leq q)=0$, $q\in\Rp$. Therefore in each step $n$ we
can restrict  our attention to processes stopped at $\sigma^n_a$.
Now, for $n\in\N$, $b>0$ set
\[
\tau^n_b=\inf\{ t>0; \max([M\rn]_t,\langle M\rn\rangle_t,
|V\rn|^2_t,|\wh X\rnn_t|,|\wh K\rnn|_t)
>b\}.\]
The processes $[M\rn],\langle M\rn\rangle, |V\rn|^2$, $|\wh
X\rnn_t|,|\wh K\rnn|_t$ stop\-ped at $\tau^n_b$  are bounded.
Moreover,
\[
\lim_{b\rightarrow\infty}\limsup_{n\rightarrow\infty}{
P}(\tau^n_b\leq q)=0,\quad q\in\Rp,
\]
so we can restrict our attention to the processes stopped at
$\tau^n_b$. By \cite[Lemma 2.3(i)]{Sa},
\begin{align*}
|\bar X\rnn_t-\wh X\rnn_t|^2&\leq |\bar Y\rnn_t-\wh Y\rnn_t|^2
+\frac{1}{\ro}\int_0^{\rho^n_t}|\bar X^n_s-\wh X\rnn_s|^2\,d(|\bar K^n|
+|\wh K\rnn|)_s\\
&\quad+2\int_0^{\rho^n_t}
(\bar Y\rnn_t-\bar Y^n_s)-(\wh Y\rnn_t-\wh Y\rnn_s)\,d(\bar K^n-\wh K\rnn)_s\\
&=|\bar Y\rnn_t-\wh Y\rnn_t|^2
+\frac{1}{\ro}\int_0^{t}|\bar X\rnn_{s-}-\wh X\rnn_{s-}|\,
d(|\bar K\rnn|+|\wh K\rnn|)_s\\
&\quad +2\int_0^{t}(\bar Y\rnn_t-\bar Y\rnn_s)-(\wh Y\rnn_t-\wh
Y\rnn_s)\, d(\bar K\rnn-\wh K\rnn)_s +R^{n,1}_t,
\end{align*}
where
\begin{align*}
R^{n,1}_t=&\frac{1}{\ro}\int_0^{\rho^n_t}
|(\bar X^n_s-\bar X\rnn_{s-})-(\wh X\rnn_s-\wh X\rnn_{s-})|^2\,
d(|\bar K^n|+|\wh K\rnn|)_s\\
&\qquad+2\int_0^{\rho^n_t}(\bar Y\rnn_s-\bar Y^n_s)\,d(\bar K^n-\wh K\rnn)_s.
\end{align*}
By the integration by parts formula,
\begin{align*}
&2\int_0^{t}(\bar Y\rnn_t-\bar Y\rnn_s)-(\wh Y\rnn_t-\wh Y\rnn_s)\,
d(\bar K\rnn-\wh K\rnn)_s\\
&\quad =2\int_0^t(\bar K\rnn_{s-}-
\wh K\rnn_{s-})\,d(\bar Y\rnn-\wh Y\rnn)_s\\
&\quad=2\int_0^t(\bar X\rnn_{s-}- \wh X\rnn_{s-})\,d(\bar
Y\rnn-\wh Y\rnn)_s-2\int_0^t(\bar Y\rnn_{s-}-
\wh Y\rnn_{s-})\,d(\bar Y\rnn-\wh Y\rnn)_s\\
&\quad=2\int_0^t(\bar X\rnn_{s-}- \wh X\rnn_{s-})\,d(\bar
Y\rnn-\wh Y\rnn)_s+[\bar Y\rnn-\wh Y\rnn]_t-|\bar Y\rnn_t-\wh
Y\rnn_t|^2,
\end{align*}
which implies that
\begin{align}
\nonumber|\bar X\rnn_t-\wh X\rnn_t|^2
&\leq [\bar Y\rnn-\wh Y\rnn]_t+\frac{1}{\ro}\int_0^{t}
|\bar X\rnn_{s-}-\wh X\rnn_{s-}|\,d(|\bar K\rnn|+|\wh K\rnn|)_s\\
&\qquad+ 2\int_0^t(\bar X\rnn_{s-}- \wh X\rnn_{s-})\,d(\bar
Y\rnn-\wh Y\rnn)_s +R^{n,1}_t\nonumber\\
&= I^{n,1}_t+I^{n,2}_t+I^{n,3}_t+R^{n,1}_t. \label{eq4.12}
\end{align}
Since we may assume that $f$ is   Lipschitz continuous, we have
\begin{align*}
I^{n,1}_t&=\sum_{k;\tk\leq t}\big |\int_{\tkk}^{\tk}
(f(\bar X^n_s)-f(\wh X^n_s))ds\frac{\Delta Z_k}{\Delta t_k}\big |^2\\
&\leq C\sum_{k;\tk\leq t}|\bar X^n_{\tkk}-\wh X^n_{\tkk}|^2|\Delta Z_k|^2\\
&\quad+C\sup_{s\leq t}|(\bar X^n_s-\bar X\rnn_s)+
(\wh X^n_s-\wh X\rnn_s)|^2\sum_{k;\tk\leq t}|\Delta Z_k|^2\\
&=C\int_0^t|\bar X\rnn_{s-}-\wh X\rnn_{s-}|^2\,d[Z\rn]_s +R^{n,2}_t.
\end{align*}
Similarly, by using Lipschitz continuity of $f'f$ we get
\begin{align*}
|I^{n,3}_t|&=|\sum_{k;\tk\leq t}
(\bar X^n_{\tkk}-\wh X^n_{\tkk})
(\int_{\tkk}^{\tk}(f(\bar X^n_s)-f(\wh X^n_s))ds\frac{\Delta Z_k}{\Delta t_k})|\\
&\leq|\sum_{k;\tk\leq t}(\bar X^n_{\tkk}-\wh X^n_{\tkk})
(f(\bar X^n_{\tkk})-f(\wh X^n_{\tkk}))(\Delta Z_k)|\\
&\quad+|\sum_{k;\tk\leq t}(\bar X^n_{\tkk}-\wh X^n_{\tkk})
(\int_{\tkk}^{\tk}(\int_{\tkk}^{s}(f'(\bar X^n_u)d\bar X^n_u
-f'(\wh X^n_s)d\wh X^n_u)ds\frac{\Delta Z_k}{\Delta t_k})|\\
&\leq|\int_0^t(\bar X\rnn_{s-}-\wh X\rnn_{s-})
(f(\bar X\rnn_{s-})-f(\wh X\rnn_{s-}))\,dZ\rn_s|\\
&\quad+C\int_0^t|\bar X\rnn_{s-}-\wh X\rnn_{s-}|^2\,d[Z\rn]_s +R^{n,3}_t
\end{align*}
and
\begin{align*}
|R^{n,3}_t|&\leq C\sup_{s\leq t}|(\bar X^n_s-\bar X\rnn_s)+
(\wh X^n_s-\wh X\rnn_s)||\sum_{k;\tk\leq t}
|\bar X^n_{\tkk}-\wh X^n_{\tkk}||\Delta Z_k|^2\\
&\quad+|\sum_{k;\tk\leq t}(\bar X^n_{\tkk}-\wh X^n_{\tkk})
(\int_{\tkk}^{\tk}(\int_{\tkk}^{s}(f'(\bar X^n_u)d\bar K^n_u)\,ds
\frac{\Delta Z_k}{\Delta t_k}|\\
&\leq C\sup_{s\leq t}|(\bar X^n_s-\bar X\rnn_s)+
(\wh X^n_s-\wh X\rnn_s)|\int_0^t|\bar X^n_{s-}-\wh X^n_{s-}|\,d[Z\rn]_s\\
&\quad+C\max_{k;\tk\leq t}|\Delta Z_k|\int_0^t|\bar X\rnn_{s-}-\wh
X\rnn_{s-})|\,d|\bar K\rnn|_s.
\end{align*}
Substituting the last three estimates into (\ref{eq4.12}) we
obtain
\begin{align}
|\bar X\rnn_t-\wh X\rnn_t|^2&\leq C\int_0^t
|\bar X\rnn_{s-}-\wh X\rnn_{s-}|^2\,d[Z\rn]_s\label{eq4.13}\\
&\quad+\frac{1}{\ro}\int_0^{t}|\bar X\rnn_{s-}-\wh X\rnn_{s-}|\,
d(|\bar K\rnn|+|\wh K\rnn|)_s\nonumber\\
&\quad+|\int_0^t(\bar X\rnn_{s-}-\wh X\rnn_{s-})
(f(\bar X\rnn_{s-})-f(\wh X\rnn_{s-}))\,dZ\rn_s| +R^{n}_t,\nonumber
\end{align}
where $R^n_t=R^{n,1}_t+R^{n,2}_t+R^{n,3}_t$, $t\in\Rp$, $n\in\N$.
Simple calculation shows that for any $q\in\Rp$,
$\epsilon_n=E\sup_{t\leq q}|R^n_t|\to 0$. Fix $q\in\Rp$. Later on
we  will restrict our attention to processes which in addition are
are stopped at $q$. Set $A^n_t=[Z\rn]_t+|V\rn|_t+|\bar
K\rnn|_t/\ro+|\wh K\rnn|_t/\ro$. Clearly, there is $C_1>0$ such
that $A^n_{\infty}+[M\rn]_{\infty}+\langle
M\rn\rangle_{\infty}\leq C_1$.   On the other hand, by
(\ref{eq4.13}), there is $C_2>0$ such that for any $({\cal
F}\rn_t)$ stopping time $\gamma^n$,
\begin{align*}
&E\sup_{t<\gamma_n}|\bar X\rnn_t-\wh X\rnn_t|^2
\leq C_2\int_0^{\gamma^n-}\sup_{u\leq s}
|\bar X\rnn_{u-}-\wh X\rnn_{u-}|^2\,dA^n_s\\
&\quad\qquad+E(\sup_{t\leq \gamma^n-}
|\int_0^t(\bar X\rnn_{s-}-\wh X\rnn_{s-})
(f(\bar X\rnn_{s-})-f(\wh X\rnn_{s-}))\,dM\rn_s|) +\epsilon_n.
\end{align*}
Since $f$ is  Lipschitz continuous, it follows from the version of
Metivier-Pellaumail inequality proved in  Pratelli \cite{Pra} and
Schwartz inequality that
\begin{align*}
&E(\sup_{t< \gamma^n}|\int_0^t(\bar X\rnn_{s-}-\wh X\rnn_{s-})
(f(\bar X\rnn_{s-})-f(\wh X\rnn_{s-}))\,dM\rn_s|)\\
&\qquad\leq c
E(\int_0^{\gamma_n-}|\bar X\rnn_{s-}-\wh X\rnn_{s-})|^4\,
d([M\rn]+\langle M\rn\rangle)_s)^{1/2}\\
&\qquad\leq cE\sup_{t< \gamma^n}|\bar X\rnn_{s-}-\wh X\rnn_{s-})
|(\int_0^{\gamma_n-}|\bar X\rnn_{s-}-\wh X\rnn_{s-})|^2\,d([M\rn]
+\langle M\rn\rangle)_s)^{1/2}\\
&\qquad\leq c(E\sup_{s<\gamma^n}|\bar X\rnn_{s-}-\wh X\rnn_{s-})|^2)^{1/2}
(E\int_0^{\gamma_n-}|\bar X\rnn_{s-}-\wh X\rnn_{s-})|^2\,d([M\rn]
+\langle M\rn\rangle)_s)^{1/2}\\
&\qquad\leq \frac{1}{2}E\sup_{s<\gamma^n}|\bar X\rnn_{s-}-\wh X\rnn_{s-})|^2
+c'E\int_0^{\gamma_n-}|\bar X\rnn_{s-}-\wh X\rnn_{s-})|^2\,d([M\rn]
+\langle M\rn\rangle)_s.
\end{align*}
Therefore, for any $({\cal F}\rn_t)$ stopping time $\gamma^n$,
\begin{align*}
E\sup_{t<\gamma_n}&|\bar X\rnn_t-\wh X\rnn_t|^2\\
&\leq (2C_2+2c')E\int_0^{\gamma^n-}
\sup_{u\leq s}|\bar X\rnn_{u-}-\wh X\rnn_{u-}|^2\,
d(A^n+[M\rn]+\langle M\rn\rangle)_s+2\epsilon_n.
\end{align*}
From the stochastic version of Gronwall's lemma (see, e.g.,
\cite[Lemma 2]{Ma} or \cite[Lemma C1]{Sl3}) it now follows that
\[
E\sup_{t<q\wedge\tau^ n_b}|\bar X\rnn_t-\wh X\rnn_t|^2\leq
2\epsilon_n\exp\{(2C_2+2c')C_1)\}\lra0.
\]
Hence we conclude (\ref{eq4.10}) and completes the proof.
\end{proof}
\begin{remark} {\rm \label{rem4}
In general, if $Z$ is discontinuous, the solutions of
(\ref{eq2.6})  cannot be approximated by solutions of
(\ref{eq4.20}). To see this, let us consider $Z$ such that $Z_t=0$
if $t<1$  and $Z_t=1$, otherwise. Set $\tk=k/n$, $n\in\N$,
$k\in\N\cup\{0\}$. If $f$ is bounded and Lipschitz continuous then
there exists a unique solution $(\bar X^n,\bar K^n)$ of
(\ref{eq4.20}). Moreover, it is easy to check that $\bar X^n_t\to
X_0$  if $t<1$ and  $\bar X^n_t\to X_1$ if $t\geq1$, where $(X,K)$
is a solution of   the reflecting equation
\[X_t=X_0+\int_0^t f(X_s)ds\Delta Z_1+K_t,\quad t\in\Rp.\]
However, the limit of $\{\bar X^n\}$ need not be a solution of the
Skorokhod problem with jumps  (in general we do not have the
property $X_1=\Pi_{\bar D}(X_0+\int_0^1 f(X_s)ds\Delta Z_1)$).
Consequently, the limit need not be a solution of (\ref{eq2.6})
driven by $Z$.
 }
\end{remark}
\mbox{}\\
{\bf Acknowledgements}\\[1mm]The author thank the referee for  careful reading of the paper   and
many valuable remarks.
 Research supported by Polish NCN grant no.  2012/07/B/ST1/03508.
\end{document}